\documentclass[a4paper, 11pt]{article}
\usepackage{amsmath,amsfonts,amssymb,graphicx}
\usepackage{a4wide}
\usepackage{hyperref}
\usepackage{color}

\usepackage{cases}

\usepackage{tikz}
\usetikzlibrary{arrows.meta}

\usepackage{amsthm}
\newtheorem{theorem}{Theorem}
\newtheorem{lemma}[theorem]{Lemma}
\newtheorem{corollary}[theorem]{Corollary}
\newtheorem{remark}[theorem]{Remark}

\newtheorem{definition}[theorem]{Definition}

\renewcommand{\SS}{\mathbb{S}}

\newcommand{\BB}{\mathbb{B}}
\newcommand{\R}{\mathbb{R}}
\newcommand{\f}{\mathbf{f}}

\newcommand{\grads}{\nabla_\SS}

\newcommand{\eps}{\varepsilon}

\usepackage{authblk}

\title{Global versus regional internal–external potential field separation
}

\author[1]{X. Huang\thanks{xinpeng.huang@csu.edu.cn}}
\author[2]{C. Gerhards\thanks{christian.gerhards@geophysik.tu-freiberg.de}}
\author[1]{Z. Ren\thanks{renzhengyong@csu.edu.cn}}
\affil[1]{Central South University, School of Geosciences and Info-Physics, China}
 \affil[2]{TU Bergakademie Freiberg, Institute of Geophysics and Geoinformatics, Germany}
\date{\today}
\begin{document}
	
\maketitle
	
\begin{abstract}

Internal–external field separation is crucial for many aspects of geomagnetism, aiming at distinguishing contributions of the magnetic field generated within a given observation surface from those generated in the exterior. When data are available on a full spherical observation surface, this separation is a standard, stable, and widely used procedure dating back to Gauss. However, when data are only available in a subdomain of the observation surface (as is the case for aeromagnetic and ground-based surveys), the situation drastically changes. Here we show that, without prior assumptions, an internal–external field separation is not uniquely possible. Given the geophysically reasonable assumption that the exterior sources, e.g., magnetospheric and ionospheric current systems, are located above a source-free spherical shell, we show that a unique separation becomes possible but that it is highly unstable. The results are based on the spherical Hardy--Hodge decomposition and explain the intrinsic difficulties of regional data-based internal--external potential field separation. 
\end{abstract}

\section{Introduction}

Geomagnetic measurements contain contributions from various sources. The separation of internal and external fields is a fundamental problem of both mathematical and geophysical interest. It is frequently used to separate the geomagnetic field into contributions originating from sources within the domain enclosed by the observation surface and from sources in the exterior of the observation surface (see, e.g., \cite{backus1996,baratchartgerhards16,Mayer2006,olsen10b,Plattner2017,sabaka15}). This problem is mathematically closely related to the characterization of silent magnetizations (i.e., magnetizations whose corresponding magnetic fields vanish inside or outside of the source layer), although the geophysical interpretation is slightly different. The latter has been investigated quite intensively, e.g., \cite{BGK,baratchart13,barvilhar19,gerhards16a,gubbins11,verles19,verles18} in the context of inverse magnetization problems. Disentangling mixed-source geomagnetic signals is essential for interpretation, yet it is generally a challenging inverse problem. In this respect, internal-external field separation in geomagnetism is a comparatively tractable exception: it follows from a canonical decomposition of vector fields on the observation surface, the so-called Hardy-Hodge decomposition.

If the observation surface is a sphere and data are available on the full sphere, the internal-external field separation is unique and stable. In fact, it coincides with the expansion in terms of well-known (Gauss) vector spherical harmonics. This expansion in terms of spherical harmonics underpins almost all global geomagnetic field models (e.g., \cite{Alken2021,olsen20,lesur10,emag2,sabaka20}).

In practice, the feasibility of internal--external field separation also depends on the available measurements and observation platforms. Geomagnetic data are commonly obtained from ground-based observatories and magnetotelluric stations, aeromagnetic campaigns, and satellite missions. Satellites provide near-global coverage at the orbital altitude, making Gauss-type global spherical harmonic decompositions directly applicable (see, e.g., \cite{olsen15,Olsen2012}). Ground-based measurements, by contrast, do not provide such global coverage. However, they can provide continuous time series at fixed locations and are particularly valuable for capturing temporal variations and induction responses over a broad frequency range. Furthermore, at satellite altitudes, ionospheric current systems may lie below, within, or close to the observation shell, which introduces additional complexity into the interpretation of the separated field components, especially in studies related to geomagnetic induction sounding (see, e.g., \cite{gray24,kuv12,kuv15,ren25}). Thus, ground-based and near-surface data offer complementary advantages. In recent years, efforts such as continental-scale arrays have substantially improved the spatial resolution of geomagnetic measurements at the Earth's surface over land. However, data coverage remains geographically heterogeneous and incomplete, particularly over oceanic regions. 

This naturally raises the question whether internal--external field separation can be meaningfully adapted to geographically incomplete observations, that is, to geomagnetic measurements available only on a patch $U \subset \SS$ of the observation sphere $\SS$. More specifically, one may ask which properties of the global theory remain valid and which necessarily fail when global separation is restricted to local data.

A convenient language for analyzing the effect of patch restriction is provided by the Hardy-Hodge framework, e.g., from \cite{BGK,gerhuakeg23}. This formulates internal-external field separation in terms of \emph{function spaces} and extends beyond the spherical-harmonic setting. In this framework, one models external-source contributions by the Hardy space $\mathcal  H_+(\SS)$ and internal-source contributions by the Hardy space $\mathcal  H_-(\SS)$. All other contributions are collected in the space of tangential, divergence-free vector fields $\mathcal H_{df}(\SS)$. That is, any vector field in $L^2(\SS)^3$ can be decomposed into these three contributions. For brevity, we restrict attention here to the two Hardy components and omit the tangential divergence-free part. From a geophysical point of view, such contributions may be relevant at satellite altitude, where source currents can cross the orbit, but they are negligible for the regional ground-based and near-surface settings that mainly motivate the present work. From a mathematical point of view, tangential divergence-free vector fields can introduce additional non-uniqueness, but they do not contribute to the instability mechanism of the separation problem (a brief discussion on this is provided in Appendix \ref{app:divfree}). Therefore, in this paper, we solely focus on the internal-external separation problem for \emph{potential} fields.

\paragraph{Problem of interest.} The central question of this paper is the following localized separation problem: given $\mathbf{d}\in (\mathcal  H_+(\SS)\oplus \mathcal  H_-(\SS))|_U\subset L^2(U)^3$ on a subset (which we call \lq\lq patch\lq\lq) $U\subset\SS$, can one determine $\f_+\in\mathcal H_+(\SS)$, $\f_-\in\mathcal H_-(\SS)$ such that
\begin{align}
(\f_+ + \f_-)\big|_{U}=\mathbf{d}\,?\label{eqn:probform}
\end{align}
Although this problem is of particular significance in many geomagnetic applications, and practical approaches based on Slepian or spherical cap harmonic analysis have been discussed (see, e.g., \cite{plattnersimons14b,Plattner2017,Torta2019}), its fundamental ill-posedness has received much less systematic attention. Rather than proposing another reconstruction algorithm, the present paper aims to clarify the structural limitations of localized internal--external separation, in particular with respect to feasibility, uniqueness, and stability. To this end, we combine recent Hardy--Hodge representations of geomagnetic field components (e.g., \cite{BGK,gerhuakeg23}) with quantitative conditional stability estimates for Cauchy problems and harmonic continuation (e.g., \cite{aleron09,bou10,isakov12,ruesal19}).
Our main results show that localized internal--external separation differs fundamentally from its global counterpart. More precisely:

\smallskip
\noindent
(i) \emph{Non-uniqueness on patches.} Without additional assumptions, localized measurements do not determine internal and external contributions uniquely: there exist nontrivial internal and external fields whose sum vanishes on the observation patch.

\smallskip
\noindent
(ii) \emph{Uniqueness under a geophysically motivated altitude/analyticity condition (source-free shell).}
Uniqueness can be restored if one imposes the following condition: external contributions are assumed to be generated only above some fixed altitude, which is equivalent to requiring a source-free shell adjacent to the observation surface.

\smallskip
\noindent
(iii) \emph{Ill-posedness persists.} Even under such an altitude/analyticity condition, the problem remains  ill-posed: arbitrarily small perturbations of the data on the patch may produce large changes in the separated components.

\smallskip
\noindent
(iv) \emph{Conditional stability.} Assuming appropriate source-size and smoothness bounds on the internal and external contributions, we can derive a logarithmic conditional stability estimate. In particular, the shell thickness enters the constants in the logarithmic estimate, reflecting the role of the source-free annulus in the uniqueness and stability mechanism.

\smallskip
The remainder of the paper is organized as follows. Section \ref{sec:hardy} recalls the Hardy-Hodge decomposition on closed Lipschitz surfaces and its spherical specialization. These ingredients provide the general framework used in the later analysis. Section \ref{sec:localized} studies the uniqueness of localized separation on spherical patches, establishing non-uniqueness under general conditions, and uniqueness under the altitude/analyticity condition. Section \ref{sec:instab} is devoted to the corresponding instability and conditional stability results. 

\paragraph{Convention (sources vs.\ domains).}
Throughout, \emph{internal} and \emph{external} refer to the \emph{location of sources} relative to the observation surface: internal sources lie below the surface and external sources above it. 
Accordingly, an external field is harmonic in the \emph{interior} region enclosed by the observation surface, while an internal field is harmonic in the \emph{exterior} region and decays at infinity. This matches the standard Gauss interpretation and avoids ambiguity when switching between physical and potential-theoretic language. A schematic sketch of the considered setup is provided in Fig. \ref{fig:setup}.

\section{Hardy--Hodge decomposition as a general framework for internal--external field separation}\label{sec:hardy}

\begin{figure}
    \centering
    \includegraphics[width=0.65\textwidth]{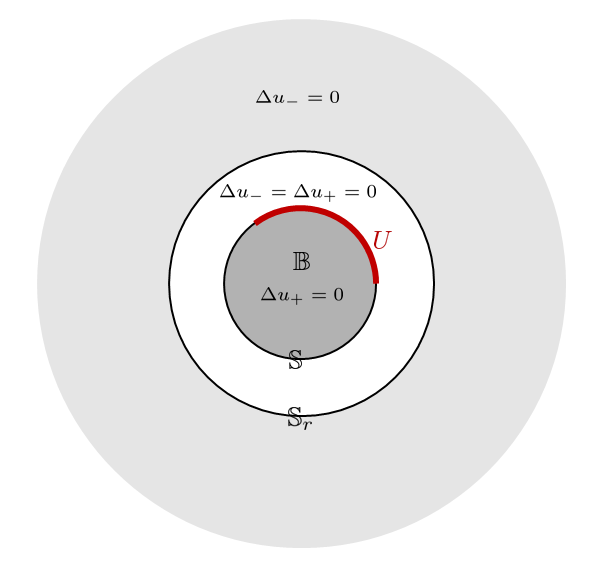}
    \caption{Illustration of the considered setup: data are available on the patch $U\subset\SS$, the internal source contribution on $\SS$ is expressed by $\f_-=\nabla u_-|_\SS$ (with $u_-$ denoting the corresponding harmonic potential in the exterior of the unit ball $\BB$), the external source contribution by $\f_+=\nabla u_+|_\SS$ (with $u_+$ denoting the corresponding harmonic potential in the interior of the  ball $\BB_r$). 
    The radius $r>1$ denotes the separation radius required for the altitude/analyticity condition, i.e., the annulus $A_r=\{x\in\R^3:1<|x|<r\}$ represents a source-free shell. In the limiting case where there is no source-free shell, one may formally set $r=1$, so that the spheres $\SS$ and $\SS_r$ coincide.}
    \label{fig:setup}
\end{figure}

In this section, we recall the Hardy–Hodge decomposition as a functional-analytic framework for global internal--external source separation. For more precise properties of the topics mentioned in this section, we refer the reader to \cite{BGK,gerhuakeg23}. The purpose of this section is mainly preparatory: we summarize the layer-potential characterization of the Hardy--Hodge decomposition on general Lipschitz surfaces, which provides the operator language needed in Sections \ref{sec:localized} and \ref{sec:instab}. We also briefly indicate how, in the spherical setting, this framework recovers the classical (Gauss) vector spherical harmonic separation.

\subsection{Hardy--Hodge decomposition on closed Lipschitz surfaces}\label{sec:HH_Lipschitz}

Let $\Omega\subset\mathbb{R}^3$ be a Lipschitz domain with $\Gamma=\partial\Omega$ its closed boundary, oriented with respect to the outward unit normal
$\eta$. For a vector field $\f\in L^2(\Gamma)^3$ we use the normal--tangential splitting
\begin{equation}\label{eq:nt_split}
\f=\eta f_\eta +\f_T,\qquad f_\eta=\f\cdot \eta,\quad\f_T=\f-(\f\cdot\eta)\eta.
\end{equation}

We recall the boundary integral operators for the Laplacian on $\Gamma$: the boundary single-layer  operator $S$, the boundary double-layer operator $K$, and the tangential gradient operator $\nabla_T S$ together with its adjoint $(\nabla_T S)^\ast$. We refer to \cite{BGK} for precise definitions, mapping properties on Lipschitz surfaces, and the associated trace/jump relations. In what follows, we only use the identities introduced below and the fact that these operators are bounded on the natural $L^2$- and trace spaces.

\paragraph{The $B$-operators and Hardy spaces.}
Following \cite[Defs.~3.2 and 3.15]{BGK}, define the scalar operators $B_+,B_-:L^2(\Gamma)^3\to L^2(\Gamma)$ by
\begin{equation}\label{eq:Bi_Bo}
B_+ \f = -\Big(\tfrac12 - K\Big)f_\eta + (\nabla_T S)^\ast\f_T,
\qquad
B_- \f = \Big(\tfrac12 + K\Big)f_\eta + (\nabla_T S)^\ast\f_T.
\end{equation}
Their adjoints $B_+^\ast,B_-^\ast:L^2(\Gamma)\to L^2(\Gamma)^3$ are given explicitly by
\cite[Lemmas~3.5 and 3.18]{BGK}:
\begin{equation}\label{eq:BiBo_adj}
B_+^\ast g = -\eta\Big(\tfrac12 - K^\ast\Big)g + \nabla_T S\, g,
\qquad
B_-^\ast g = \eta\Big(\tfrac12 + K^\ast\Big)g + \nabla_T S\, g.
\end{equation}
We define the Hardy-type subspaces by
\begin{equation}\label{eq:Hpm_def}
\mathcal H_+ (\Gamma)= \overline{\operatorname{Ran}(B_+^\ast)}^{\,L^2(\Gamma)^3},
\qquad
\mathcal H_- (\Gamma)= \overline{\operatorname{Ran}(B_-^\ast)}^{\,L^2(\Gamma)^3}.
\end{equation}

\paragraph{Potential-theoretic interpretation (physical meaning).}
Although $\mathcal H_\pm(\Gamma)$ are introduced via the boundary operators $B_+^\ast,B_-^\ast$, they admit a direct potential-theoretic characterization. For a Lipschitz surface $\Gamma$, boundary values of $\nabla u$ should be understood as nontangential limits. More precisely, if the inner(+)/outer(-) nontangential maximal functions $(\nabla u)_\pm^M$ are in $ L^2(\Gamma)^3$, then an inner/outer nontangential limit of $\nabla u$ exists a.e. on $\Gamma$ and leads to a well-defined element of $L^2(\Gamma)^3$ (see, e.g., the brief discussion in \cite{BGK,Hunt1968} and references therein). Whenever we write $\nabla u_{(\pm)}|_\Gamma$, it is meant in this nontangential limit sense. Elements of $\mathcal H_+(\Gamma)$ coincide with inner nontangential limits of gradient fields $\nabla u$ generated by harmonic scalar potentials $u$ in the interior domain $\Omega$. Elements of $\mathcal H_-(\Gamma)$ coincide with outer nontangential limits of gradients $\nabla u$ of harmonic potentials $u$ in the exterior domain $\R^3\setminus\overline{\Omega}$ that satisfy a suitable decay condition at infinity. The equivalence between this interpretation and the
operator-based definitions \eqref{eq:Hpm_def} follows from the layer-potential analysis in \cite{BGK}. More precisely:
\begin{align}
	\mathcal H_+(\Gamma)&=\{\nabla u|_\Gamma: \Delta u=0 \textrm{ in }\Omega, \|(\nabla u)_+^M\|_{L^2(\Gamma)^3}<\infty\},
	\\\mathcal H_-(\Gamma)&=\{\nabla u|_\Gamma: \Delta u=0 \textrm{ in }\R^3\setminus\overline{\Omega}, \lim_{x\to\infty} u(x)=0, \|(\nabla u)_-^M\|_{L^2(\Gamma)^3}<\infty\}.
\end{align}
In geomagnetic potential-field applications, it is convenient to read this characterization in terms of source location:
\begin{center}
\begin{tabular}{ll}
external sources (outside of $\Gamma$): & $u$ harmonic in $\Omega$ $\Rightarrow \nabla u|_\Gamma\in \mathcal H_+(\Gamma)$,\\
internal sources (inside of $\Gamma$): & $u$ harmonic in $\R^3\setminus\overline{\Omega}$, $u(x)\to 0$ as $|x|\to\infty$ $\Rightarrow\nabla u|_\Gamma\in \mathcal H_-(\Gamma).$
\end{tabular}
\end{center}

\paragraph{The divergence-free component.}
For the sake of completeness, we introduce the third contribution
\begin{equation}\label{eq:Hdf_def}
\mathcal H_{\mathrm{df}}(\Gamma)
= \Big\{\, \f\in L^2(\Gamma)^3 : \f\cdot\eta = 0 \ \text{and}\ \operatorname{div}_\Gamma \f = 0 \,\Big\},
\end{equation}
where $\operatorname{div}_\Gamma$ denotes the (weak) surface-divergence on $\Gamma$.
This space is $L^2(\Gamma)^3$-orthogonal to $\mathcal H_-(\Gamma) +\mathcal H_+(\Gamma)$ (see \cite{BGK} for the precise functional-analytic formulation).
In the present paper, $\mathcal H_{\mathrm{df}}(\Gamma)$ will not play an explicit role in the localized analysis of
Sections \ref{sec:localized} and \ref{sec:instab}. It is included here only to state the full decomposition of vector fields.

\paragraph{Hardy--Hodge decomposition as a topological direct sum (global stability).}
The main Hardy--Hodge result of \cite{BGK} yields a continuous (topological) direct-sum decomposition of $L^2(\Gamma)^3$ into the above components:
\begin{equation}\label{eq:HHD_topological}
	L^2(\Gamma)^3=\mathcal H_+(\Gamma)+ \mathcal H_-(\Gamma)+ \mathcal H_{\mathrm{df}}(\Gamma).
\end{equation}
That is, every $\f\in L^2(\Gamma)^3$ admits a unique decomposition $\f = \f_+ +\f_- +\f_{\mathrm{df}}$ with $\f_\pm\in\mathcal H_\pm(\Gamma)$ and $\f_{\mathrm{df}}\in\mathcal H_{\mathrm{df}}(\Gamma)$. The mapping $\mathcal{A}:\mathcal H_+(\Gamma) \times \mathcal H_-(\Gamma)\times \mathcal H_{\mathrm{df}}(\Gamma)\to L^2(\Gamma)^3$, $(\f_+,\f_-,\f_{\mathrm{df}})\mapsto \f=\f_++\f_-+\f_{\mathrm{df}}$ is injective. Its inverse  $L^2(\Gamma)^3\to\mathcal H_+(\Gamma) \times \mathcal H_-(\Gamma)\times \mathcal H_{\mathrm{df}}(\Gamma)$, $\f\mapsto(\f_+,\f_-,\f_{\mathrm{df}})$ is continuous (equivalently, the associated projections are bounded). On a general Lipschitz surface this splitting is typically {not} orthogonal, in the sense that $\mathcal H_+(\Gamma)$ and $\mathcal H_-(\Gamma)$ need not be orthogonal to each other, although $\mathcal H_{\mathrm{df}}(\Gamma)$ is orthogonal to $\mathcal H_+ (\Gamma) +\mathcal H_-(\Gamma)$ with respect to the $L^2(\Gamma)^3$-inner product. Nevertheless, when data are available on the full closed surface $\Gamma$, the internal-external separation is \emph{stable} in the sense of continuous dependence of $\f_\pm$ on $\f$. This contrasts sharply with the localized setting studied in Sections \ref{sec:localized} and \ref{sec:instab}. Orthogonality of the function spaces is recovered in the spherical setting, which we review in Section~\ref{subsec:spherical-hhd}.

\subsection{Spherical Hardy--Hodge decomposition and (Gauss) vector spherical harmonics}\label{subsec:spherical-hhd}

We now specify $\Omega=\mathbb{B}=\{x\in\R^3:|x|<1\}$ to be the unit ball and $\Gamma=\SS=\{x\in\mathbb R^3:|x|=1\}$ to be the unit sphere. We write $\eta(x)=x$ for the outward unit normal on $\SS$. Let $\{Y_{n,k}\}_{n\in\mathbb N_0,\;k=-n,\dots,n}$ be an orthonormal basis of scalar spherical harmonics (see, e.g., \cite{freedenschreiner09,mueller96}). We recall the standard vector spherical harmonic families aligned with the spherical Helmholtz decomposition: 
\begin{equation}\label{eq:vsh-families}
\mathbf Y_{n,k}=\eta\,Y_{n,k},\qquad 
\mathbf\Psi_{n,k}=\nabla_\SS Y_{n,k},\qquad
\mathbf\Phi_{n,k}=\eta\times\nabla_\SS Y_{n,k}.
\end{equation}
The fields $\mathbf\Phi_{n,k}$ are tangential and divergence-free (also known as {toroidal} fields), hence they span the tangential divergence-free subspace $\mathcal H_{\mathrm{df}}(\SS)
= \overline{\mathrm{span}\{\mathbf\Phi_{n,k}:n\ge1,\ |k|\le n\}}^{\,L^2(\SS)^3}$.

\paragraph{(Gauss) internal/external poloidal fields.}
We now consider the solid harmonics, with $r=|x|$ and $\xi=\tfrac{x}{|x|}$ for $x\in \mathbb{R}^3\setminus\{0\}$,
\[
V^{\mathrm{ext}}_{n,k}(x)=r^nY_{n,k}(\xi),\quad r<1\qquad \text{and} \qquad
V^{\mathrm{int}}_{n,k}(x)=r^{-(n+1)}Y_{n,k}(\xi), \quad r>1
\]
which represent harmonic functions inside and outside the unit ball $\BB$, respectively.
Using $\nabla = \eta\,\partial_r + r^{-1}\nabla_\SS$, their gradients restricted to $\SS$ are
\begin{equation}\label{eq:gauss-vsh}
\mathbf G^{\mathrm{ext}}_{n,k}=\left.\nabla V^{\mathrm{ext}}_{n,k}\right|_{r\to 1^-}
= \mathbf\Psi_{n,k} + n\mathbf Y_{n,k},
\qquad
\mathbf G^{\mathrm{int}}_{n,k}=\left.\nabla V^{\mathrm{int}}_{n,k}\right|_{r\to 1^+}
= \mathbf\Psi_{n,k}-(n+1)\mathbf Y_{n,k}.
\end{equation}
We follow the {source-location} convention stated in the introduction: {external} and {internal} refer to the location of the sources relative to the observation sphere.
Consequently, external-source fields correspond to potentials that are harmonic {inside} the unit ball (regular solid harmonics), whereas internal-source fields correspond to potentials that are harmonic in the {exterior} of the unit ball (decaying solid harmonics). With this convention, the (Gauss) vector spherical harmonics $\mathbf G^{\mathrm{ext}}_{n,k}$ represent {external-source} contributions and $\mathbf G^{\mathrm{int}}_{n,k}$ represent {internal-source} contributions (see, e.g., \cite{backus1996,freedengerhards12,Mayer2006,olsen10b}).

\paragraph{Connection to the $B$-operators.}
On the sphere, we recall the definition of $B$-operators (note that $K$ is self-adjoint on the sphere)
\begin{equation}\label{eq:Bpm-def}
B_-^{\ast} = \eta\Bigl(K^*+\tfrac12 I\Bigr) + \nabla_\SS S ,\qquad
B_+^{\ast} = \eta\Bigl(K^*-\tfrac12 I\Bigr) + \nabla_\SS S, 
\end{equation}
where $S$ and $K$ denote the (boundary) single- and double-layer operators (notice that, on the sphere, the double layer potential is self-adjoint, i.e., $K^{\ast}=K$ holds). Both layer potential operators diagonalize in the scalar spherical harmonic basis. In particular, one has (see, e.g., \cite{Gerhards2025})
\begin{equation}\label{eq:SK-eigs}
S\,Y_{n,k}=-\tfrac{1}{2n+1}\,Y_{n,k},\qquad
K\,Y_{n,k}=\tfrac{1}{2(2n+1)}\,Y_{n,k}.
\end{equation}
Consequently, for $n\ge1$,
\begin{align}\label{eq:Bpm-on-Y-correct}
B_+^{\ast}(Y_{n,k})
=-\tfrac{1}{2n+1}\Bigl( \mathbf\Psi_{n,k}+n\mathbf Y_{n,k}\Bigr)
=-\tfrac{1}{2n+1}\,\mathbf G^{\mathrm{ext}}_{n,k},
\\
B_-^{\ast}(Y_{n,k})
=-\tfrac{1}{2n+1}\Bigl( \mathbf\Psi_{n,k}-(n+1)\mathbf Y_{n,k}\Bigr)
=-\tfrac{1}{2n+1}\,\mathbf G^{\mathrm{int}}_{n,k}.
\end{align}

\paragraph{Identification of the spherical Hardy spaces.}
Let $\mathcal H_\pm(\SS)$ denote the spherical Hardy spaces (poloidal components) in the sense of the Hardy--Hodge decomposition. Under the above source-location convention, we identify
\begin{align}\label{eq:Hpm-id}
\mathcal H_+(\SS)&=\overline{\mathrm{span}\{\mathbf G^{\mathrm{ext}}_{n,k}: n\ge1,\ |k|\le n\}}^{\,L^2(\SS)^3}
=\overline{\mathrm{Ran}(B_+^{\ast})}^{\,L^2(\SS)^3},
\\
\mathcal H_-(\SS)&=\overline{\mathrm{span}\{\mathbf G^{\mathrm{int}}_{n,k}:n\ge0,\ |k|\le n\}}^{\,L^2(\SS)^3}
=\overline{\mathrm{Ran}(B_-^{\ast})}^{\,L^2(\SS)^3}.
\end{align}
Together with $\mathcal H_{\mathrm{df}}(\SS)$, this yields the spherical Hardy--Hodge decomposition
\begin{equation}\label{eq:spherical-hhd}
L^2(\SS)^3=\mathcal H_+(\SS)\oplus \mathcal H_-(\SS)\oplus \mathcal H_{\mathrm{df}}(\SS),
\end{equation}
and, in contrast to the general Lipschitz-surface setting of Section~\ref{sec:HH_Lipschitz}, the three subspaces in \eqref{eq:spherical-hhd} are mutually orthogonal in $L^2(\SS)^3$.

\begin{remark}[Zero degree case]
For $n=0$ one has $\nabla_\SS Y_{0,0}=0$ and $(K-\tfrac12)Y_{0,0}=0$. Thus, only {internal}-source fields contain the degree-zero (monopole) term. Accordingly, $B_+^{\ast}$ is invertible only on the zero-mean subspace $L^2(\SS)/\langle 1\rangle$.
\end{remark}

For later reference, we denote by $\SS_r=\{x\in\R^3:|x|=r\}$ and $\BB_r=\{x\in\R^3:|x|<r\}$ the sphere and ball of radius $r>0$, respectively. The spherical specifications discussed above, in particular the spherical harmonic expressions, are well-known. We mainly recapitulated them to highlight the relation to the more general $B$-operators, which are not as common in this context, and to set the stage for the upcoming studies that focus on the sphere and ball.

\section{Nonuniqueness and uniqueness for patches}\label{sec:localized}

Let $U$ be a fixed open subregion of $\SS$ (a ``patch''). Throughout the paper, we assume that $U$ has connected Lipschitz boundary and we denote its open complement by
\[
	V=\operatorname{int}_{\SS}(\SS\setminus U)=\SS\setminus \overline{U},
\]
assuming that it is non-empty. Whenever functions on $V$ are used as functions on the full sphere, they are understood as zero extensions to $\SS$. To make this clearer and more consistent, we define the function spaces $L^2(V)=\{f\in L^2(\SS):f=0\text{ a.e. on }U\}$ and
\[
	\widetilde H^1(V)=\{h\in H^1(\SS): h=0\ \text{a.e. on }U\}.
\]
Additionally, we define the auxiliary space $\widetilde H^1_{\text{const}}(V)=\{h\in H^1(\SS): h=\text{constant}\ \text{a.e. on }U\}$. We study internal-external field separation from data that are only available on $U$. That is, we are confronted with the question \eqref{eqn:probform} from the introduction.

\subsection{Localized separation is non-unique in \texorpdfstring{general}{general}}\label{subsec:localized-impossible}

The following theorem shows that, without additional assumptions, the answer to uniqueness is negative.

\begin{theorem}[Non-uniqueness for patch data]\label{thm:nonunique}
The restriction operator
\begin{align}
\mathcal A_U:\; \mathcal H_+(\SS)\times \mathcal H_-(\SS)\longrightarrow L^2(U)^3,\qquad
\mathcal A_U(\f_+,\f_-)=(\f_+ + \f_-)|_U,\label{eqn:aop}
\end{align}
has a nontrivial null space, i.e., $\ker(\mathcal A_U) \not=\{(0,0)\}$.
\end{theorem}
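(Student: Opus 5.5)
The plan is to construct an explicit nontrivial pair $(\f_+,\f_-)$ with $\f_++\f_-=0$ on $U$. We use a single harmonic potential that is singular only on the complementary region $V$. Since $V$ is nonempty and open, we fix a point $y_0\in V$ and a small radius $\delta>0$ such that the closed spherical cap $C=\{y\in\SS:|y-y_0|\le\delta\}$ lies in $V$. We then pick a nontrivial density $\mu$ supported on $C$, for instance $\mu\in C^\infty(\SS)$ with $\mu\ge0$ and $\mu\not\equiv0$. From it we form the single-layer potential
\[
u(x)=\int_{\SS}\frac{\mu(y)}{|x-y|}\,d\omega(y),\qquad x\in\R^3\setminus C .
\]
This $u$ is harmonic in $\R^3\setminus C$, and in particular in $\BB$ and in $\R^3\setminus\overline{\BB}$. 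It also satisfies $u(x)\to0$ as $|x|\to\infty$.

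Next we set $u_+=-u|_{\BB}$ and $u_-=u|_{\R^3\setminus\overline{\BB}}$, and define $\f_+=\nabla u_+|_\SS$ (inner limit) and $\f_-=\nabla u_-|_\SS$ (outer limit). Because $\mu$ is smooth, the single-layer potential has a $C^{1}$ (indeed smooth) extension up to $\SS$ from each side. Hence the nontangential maximal functions of $\nabla u$ are bounded, and by the potential-theoretic characterization of $\mathcal H_\pm(\SS)$ recalled in Section~\ref{sec:HH_Lipschitz} we get $\f_+\in\mathcal H_+(\SS)$ and $\f_-\in\mathcal H_-(\SS)$. On $U$ the potential $u$ is real-analytic across $\SS$, since $\overline U\cap C=\emptyset$. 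The inner and outer limits of $\nabla u$ therefore coincide there, which gives $(\f_++\f_-)|_U=0$, i.e.\ $\mathcal A_U(\f_+,\f_-)=0$.

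The remaining step is nontriviality, which is the main point to check. It suffices to show $\f_-\neq0$. Suppose $\nabla u_-|_\SS=0$. Then the normal derivative of $u_-$ vanishes on $\SS$, and $u_-$ is constant on $\SS$ because its tangential gradient vanishes. Together with decay at infinity, uniqueness for the exterior Neumann/Dirichlet problem forces $u_-\equiv c/|x|$; since the normal derivative is zero, $c=0$, so $u_-\equiv0$ in $\R^3\setminus\overline{\BB}$. By analytic continuation through $U$ (where $u$ is harmonic across $\SS$), $u\equiv0$ on the connected set $\R^3\setminus C$. This contradicts the jump relation of the single-layer potential: the normal derivative jumps by $4\pi\mu$ (up to the paper's normalization) across $C$, so $\mu\equiv0$, against our choice. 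Alternatively, we can argue via degree-$0$ coefficients: the monopole coefficient of $\f_-$ is proportional to $\int\mu\,d\omega>0$, so $\f_-\ne0$ directly. Either way $(\f_+,\f_-)\ne(0,0)$, and $\ker\mathcal A_U\neq\{(0,0)\}$.

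If one prefers to avoid layer potentials entirely, the same argument works with a point source $u(x)=|x-y_0|^{-1}$ for $y_0\in V$. However, its gradient is not in $L^2(\SS)$ near $y_0$, so the smooth density version above is the safer choice.
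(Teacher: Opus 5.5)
Your proof is correct. It rests on the same mechanism as the paper's proof, but you reach it by an explicit construction rather than by the paper's operator calculus.

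The paper writes $\f_+=B_+^{\ast}f$ and $\f_-=B_-^{\ast}g$, splits $(\f_++\f_-)|_U=0$ into normal and tangential scalar equations, and solves them. This yields the whole nullspace of Corollary~\ref{cor:nullspace}. Your pair is exactly the member of that family with $h_1=0$. Up to the normalization of the kernel, it is $(B_+^{\ast}f,-B_-^{\ast}f)$ with $f$ a multiple of $\mu\in L^2(V)$. The identity $B_+^{\ast}f-B_-^{\ast}f=-\eta f$ is precisely the single-layer jump relation behind your observation that the inner and outer gradients of $u$ agree on $U$.

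Your version has two advantages:
\begin{itemize}
\item It is more elementary. You only need boundary regularity of single-layer potentials with smooth densities and the jump relation, with no inversion of $S:L^2(\SS)\to H^1(\SS)$.
\item It checks $(\f_+,\f_-)\neq(0,0)$ explicitly, which the paper leaves implicit. In the paper this follows from injectivity of $B_-^{\ast}$ on the sphere, since $B_-^{\ast}Y_{n,k}=-\frac{1}{2n+1}\mathbf G^{\mathrm{int}}_{n,k}$ for all $n\ge0$. Your monopole argument with $\int_\SS\mu\,d\omega>0$ is the quickest form of this.
\end{itemize}

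The paper's route in turn gives the full kernel, including the elements with non-constant $h_1$. For these, the combined potential ($-u_+$ in $\BB$, $u_-$ outside) jumps across $\SS$, so it is not a single-layer potential of a density in $V$. These elements are needed later. Your elements all have $f|_U=0$, so they alone could not yield the density statement of Theorem~\ref{thm:projnullspace}.
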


The explicit expression of the nullspace is provided in Corollary \ref{cor:nullspace} in Appendix \ref{app:null}.

\begin{proof}
The proof is based on layer-potential representations of Hardy components and on the characterization of $\mathcal H_\pm(\SS)$ via the $B$-operators from Section~\ref{subsec:spherical-hhd}. Let 
\begin{align}
\f_+ &= B_+^{\ast} f=\eta (K^{\ast}-\tfrac{1}{2})f + \grads S f,\\
\f_- &= B_-^{\ast}g=\eta(K^{\ast}+\tfrac{1}{2})g+\grads S g.
\end{align}
Then, $(\f_+ + \f_-)|_U = 0$ if and only if
\begin{numcases}{}
(K^{\ast}-\tfrac{1}{2})f + (K^{\ast}+\tfrac{1}{2})g = (K^{\ast}+\tfrac{1}{2}) (f+g) -f =0  \quad\text{on}\quad U,\label{eqn:loccond1}\\
\grads S f + \grads S g= \grads S (f+g) = 0 \quad \text{on}\quad U,
\label{eqn:loccond2}
\end{numcases}
i.e., the tangential part and normal part of $\f_+ + \f_-$ vanish on $U$. Condition \eqref{eqn:loccond2} implies that $S (f+g)$ is constant on $U$. Thus,
\begin{align}
	f+g= S^{-1}h \label{eqn:vanish_condition_2}
\end{align}
for an arbitrary $h\in{H}^1(\SS)$ that is constant on $U$, since $S$ is bounded and invertible from $L^2(\SS)$ to $H^1(\SS)$. From \eqref{eqn:loccond1} we then obtain
\begin{align}
f|_{U}= [(K^{\ast}+\tfrac{1}{2})S^{-1}h]|_{U}.  \label{eqn:vanish_condition_1}
\end{align}
On $V$, $f$ can be chosen to be any square-integrable function. This yields the nontriviality of the nullspace. Combining the observations from above, we also directly obtain the explicit representation of the nullspace as stated in Corollary \ref{cor:nullspace} in Appendix \ref{app:null}.
\end{proof}

\begin{remark}[Geophysical interpretation]\label{rem:geo-nonunique}
In particular, Theorem~\ref{thm:nonunique} states that there exist nonzero pairs $(\f_+,\f_-)\in \mathcal H_+(\SS)\times \mathcal H_-(\SS)$ such that $(\f_+ + \f_-)|_U = 0$ and, hence, internal-external field separation from information restricted to $U$ is \emph{not} unique in general.
	
This illustrates a basic limitation of localized geomagnetic observations: when data are only available on a restricted subregion of the sphere (e.g.\ a continental-scale array footprint), there may exist internal and external contributions that annihilate each other on the observation domain and thus do not contribute to the measured data. 
\end{remark}

\subsection{An altitude/analyticity condition restores uniqueness
}\label{subsec:localized-conditioned}

The non-uniqueness in Theorem~\ref{thm:nonunique} shows that patch data alone are insufficient for unconditional internal-external separation within the full classes $\mathcal H_+(\SS)$ and $\mathcal H_-(\SS)$. In many geophysical settings, however, the external current systems are not arbitrarily close to the Earth's surface. Near-surface air is typically treated as an effective insulator at the frequencies of interest, and the dominant external sources of the geo-electromagnetic system are associated with ionospheric and magnetospheric current systems, which are located at altitudes typically $\gtrsim 60\,\mathrm{km}$ above the surface \cite{Campbell2003}. With $a$ denoting the Earth's radius, an altitude $h$ above the Earth's surface corresponds to $r=1+h/a$ in our unit-sphere normalization (i.e., for  $a\approx 6371\,\mathrm{km}$, $h\approx60\,\mathrm{km}$ gives $r\approx 1.01$). This motivates imposing an altitude constraint (or, equivalently, an analyticity constraint) on the external-source component, meaning that the annulus $A_{a,a+h}=\{x\in\R^3:a<|x|<a+h\}$ (or the annulus $A_r=\{x\in\R^3: 1<|x|<r\}$ in our normalized setting) is considered source-free. Internal sources may still be arbitrarily close to the measurement surface. Thus, our results also hold for measurements at the Earth's surface. We show that such a constraint restores uniqueness of the localized internal-external field separation. However, it does not remove the fundamental instability of the inverse problem.

\begin{definition}[External-source altitude class]\label{def:Hplus_r}
For a fixed \(r>1\), we define \(\mathcal H_+^{(r)}(\SS)\subset \mathcal H_+(\SS)\) as the class of external-source fields on \(\SS\) whose associated scalar potential extends harmonically to the ball \(\BB_r=\{x\in\mathbb R^3:|x|<r\}\). 
More precisely,
\[
    \mathcal H_+^{(r)}(\SS)
    =
    \left\{
    \nabla u|_{\SS}:\Delta u=0\ \text{in }\BB_r,\ \|(\nabla u)_+^M\|_{L^2(\SS_r)^3}<\infty,\ u|_{\SS_r}\in H^1(\SS_r)
    \right\}.
\]
\end{definition}

Any \(\f_+\in\mathcal H_+^{(r)}(\SS)\) has an associated scalar potential \(u_+\) that is harmonic in \(\BB_r\). 
Let \(S_r\) denote the boundary single-layer operator on \(\SS_r\), with the same normalization as \(S\). That is, on $\SS_r$, we have $S_r\varphi=\frac{1}{4\pi}\int_{\SS_r}\frac{\varphi(y)}{|\cdot-y|}\,d\sigma(y)$ for $\varphi\in L^2(\SS_r)$. The spherical harmonic representation of \(S_r\), as indicated in \eqref{eq:SK-eigs} for $S$, shows that \(S_r:L^2(\SS_r)\longrightarrow H^1(\SS_r) \) is an isomorphism. Hence there exists a unique density \(\varphi_+=S_r^{-1}(u_+|_{\SS_r})\in L^2(\SS_r) \) such that \[ u_+(x)=\frac{1}{4\pi}\int_{\SS_r}\frac{\varphi_+(y)}{|x-y|}\,d\sigma(y), \qquad x\in \BB_r . \] Moreover, \[ \|u_+|_{\SS_r}\|_{H^1(\SS_r)} \le C(r)\|\varphi_+\|_{L^2(\SS_r)}, \] and, since \(S_r\) is an isomorphism, the converse bound also holds. Thus \(\|\varphi_+\|_{L^2(\SS_r)}\) is equivalent to the \(H^1(\SS_r)\)-norm of the 
trace \(u_+|_{\SS_r}\). Conversely, every potential $u_+$ defined via such a single-layer construction is harmonic in \(\BB_r\) and naturally leads to a member of \(\mathcal H_+^{(r)}(\SS)\).

\begin{theorem}[Uniqueness for patch data under an altitude/analyticity constraint]\label{thm:unique_r}
Let $r>1$ be fixed and denote by $\mathcal A_U^{(r)}:\mathcal H_+^{(r)}(\SS)\times \mathcal H_-(\SS)\to L^2(U)^3$ the restriction of $\mathcal A_U$ to $\mathcal H_+^{(r)}(\SS)\times \mathcal H_-(\SS)$. Then, $\mathcal A_U^{(r)}$ has only the trivial nullspace
\begin{align}
	\ker(\mathcal A_U^{(r)})=\big\{(0,0)\big\}.\nonumber
\end{align} 
\end{theorem}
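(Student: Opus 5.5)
Suppose $(\f_+,\f_-)\in\ker(\mathcal A_U^{(r)})$, i.e., $\f_+=\nabla u_+|_\SS$ with $u_+$ harmonic in $\BB_r$, $\f_-=\nabla u_-|_\SS$ with $u_-$ harmonic in $\R^3\setminus\overline{\BB}$ and $u_-(x)\to0$ at infinity, and $(\f_++\f_-)|_U=0$. The plan is to show that $u_-$ continues harmonically across $U$ into the interior, producing a function harmonic on all of $\R^3$ that decays at infinity. Liouville's theorem then forces everything to vanish.

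\emph{Step 1 (Cauchy data on the patch).} On $U$ we have $\nabla u_-=-\nabla u_+$ as nontangential limits. The tangential parts give $\grads(u_-+u_+)=0$ on $U$. Since $U$ is connected (it is open with connected boundary, and we may take it connected or argue componentwise), $u_-+u_+=c$ is constant on $U$. The normal parts give $\partial_\eta(u_-+u_+)=0$ on $U$. Set $w=u_-+u_+-c$ in the shell $A_r=\{1<|x|<r\}$, where both potentials are harmonic. Then $w$ is harmonic in $A_r$ and has vanishing Cauchy data (trace and normal derivative, in the nontangential/$L^2$ sense) on the open piece $U\subset\partial A_r$.

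\emph{Step 2 (unique continuation).} Holmgren-type uniqueness for the Cauchy problem, e.g.\ in the form of the quantitative results cited from \cite{aleron09,isakov12}, gives $w\equiv0$ in $A_r$. Concretely, I would extend $w$ by $0$ across $U$ into a small neighborhood inside $\BB$. The result is a weakly harmonic function in a domain containing $U$ (vanishing Cauchy data means there is no jump of the trace or the flux). It vanishes on an open set, so by analyticity it vanishes on the connected set $A_r$. The main technical obstacle is justifying this gluing at the low regularity of nontangential $L^2$ limits. I would handle it by using the interior smoothness of $u_+$ across $\SS$ (it is harmonic in $\BB_r\supset\SS$) and Green's representation for $u_-$ near $U$, which shows that the glued function satisfies the mean-value property weakly.

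\emph{Step 3 (global harmonic function).} From $w\equiv0$ we get $u_-=c-u_+$ on $A_r$. Define $W=u_-$ on $\R^3\setminus\overline{\BB}$ and $W=c-u_+$ on $\BB_r$. These agree on the overlap $A_r$, so $W$ is harmonic on all of $\R^3$ and $W\to0$ at infinity. By Liouville's theorem $W\equiv0$. Hence $u_-\equiv0$, giving $\f_-=0$, and $u_+\equiv c$ in $\BB_r$, giving $\f_+=\nabla u_+|_\SS=0$. This proves $\ker(\mathcal A_U^{(r)})=\{(0,0)\}$.

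It is worth noting where $r>1$ is used, in contrast to Theorem \ref{thm:nonunique}. It is needed precisely so that $u_+$ and $u_-$ share the open overlap region $A_r$, on which unique continuation from $U$ can act. Without it the two potentials live on disjoint sides of $\SS$ and no such continuation is available.
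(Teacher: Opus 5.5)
Your proof is correct. Steps 1--2 follow the paper's route.

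\textbf{Steps 1--2.} The paper also forms $w=u_++u_-$ in the shell $A_r$ and reads off that $w$ is constant with vanishing normal derivative on $U$. It then invokes a Holmgren/analyticity uniqueness result (\cite[Lemma~1]{atfeh10}) to conclude that $w$ is constant in $A_r$. Your zero-extension/weak-harmonicity argument is one standard way of proving that lemma. The regularity issue you flag is routine:
\begin{itemize}
\item test against $\phi$ supported near a point of $U$;
\item apply Green's formula on $\{|x|>1+t\}$;
\item let $t\to0$, using dominated convergence with the $L^2$ nontangential maximal function of $\nabla u_-$.
\end{itemize}
The paper sidesteps this by first asserting that $w$ continues harmonically across $U$, so that its Cauchy data there are classical.

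\textbf{The last step is genuinely different.} The paper stops at $\nabla w=0$ in $A_r$ and takes the trace to get $\f_+=-\f_-$ on $\SS$. It then concludes from $\mathcal H_+(\SS)\perp\mathcal H_-(\SS)$ that both vanish. You instead glue $u_-$ and $c-u_+$ across the open overlap $A_r$ into an entire harmonic function that tends to $0$ at infinity, and apply Liouville.

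\textbf{What each buys.} The paper's ending is a one-liner given the spherical Hardy--Hodge decomposition of Section~\ref{subsec:spherical-hhd}. It would also carry over to Lipschitz surfaces through the direct-sum property of \cite{BGK}. Your ending is elementary and independent of the Hardy-space machinery. It also gives the slightly stronger conclusion that the potentials themselves are trivial ($u_-\equiv0$, $u_+\equiv c$), not just their gradients on $\SS$.

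Your gluing is this easy only because $r>1$ provides an open overlap region. Without that overlap, showing $\mathcal H_+(\SS)\cap\mathcal H_-(\SS)=\{0\}$ by gluing would require matching traces and normal derivatives across $\SS$ itself. That matching is what the orthogonality/direct-sum result encodes.
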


\begin{proof}
Take $(\f_+,\f_-)\in \mathcal H^{(r)}_+(\SS)\times \mathcal H_-(\SS)$ such that $(\f_+ + \f_-)|_U=0$. By the definition of the underlying spaces, there exist functions $u_+$ and $u_-$ that are harmonic in $\BB_r$ and $\R^3\setminus\overline{\BB}$, respectively, such that $\f_+=\nabla u_+|_\SS$ and $\f_-=\nabla u_-|_\SS$. Therefore, the function $w=u_++u_-$ is harmonic in the annulus $A_r=\{x\in \R^3:1<|x|<r\}$. 
Due to the assumption $(\f_+ + \f_-)|_U=0$, its trace additionally satisfies $\nabla w|_U=0$. 
Therefore, $\nabla w$ can be expressed as the Abel-Poisson transform over the spheres $\SS$ and $\SS_r$ of a function that vanishes on an open patch $U\subset\SS$, and $w$ can be continued harmonically in an open environment of $U$. As a consequence, the trace of $w$ on $U$ can be understood in the classical sense and is real-analytic. We get $\nabla_\SS w=0$ and $\partial_\eta^+ w=0$ on $U$. The former implies that $w$ is constant on $U$ and leads to the following Cauchy data on the patch $U$:
\begin{align}
w=\text{const. on }U,\qquad \partial_\eta^+ w=0\ \text{ on }U .
\end{align}
From a Holmgren/analyticity-type uniqueness argument as in \cite[Lemma~1]{atfeh10}, it follows that $w$ is constant in $A_r$ and, hence, $\nabla w= 0$ in the annulus $A_r$. Taking the trace of $\nabla w$ on $\SS$ yields $\f_++\f_-=\nabla w|_\SS=0$. Thus, $\f_+=-\f_-$ on $\SS$. Since the spherical Hardy spaces $\mathcal H_+(\SS)$ and $\mathcal H_-(\SS)$ are orthogonal to each other, their intersection is trivial. Hence, $\f_+=\f_-=0$.
\end{proof}

\begin{remark}
	In particular, Theorem~\ref{thm:unique_r} states that there exist \emph{no} nonzero pairs $(\f_+,\f_-)\in \mathcal H_+^{(r)}(\SS)\times \mathcal H_-(\SS)$ such that $(\f_+ + \f_-)|_U = 0$ and, hence, internal-external field separation from information restricted to $U$ is unique in these spaces.
\end{remark}

In planetary-scale geomagnetic modelling, imposing a spherical bandlimit is standard. This provides a particularly transparent special case of the altitude/analyticity condition. In such a setting, Theorem \ref{thm:unique_r} applies directly and yields the following corollary.

\begin{corollary}[Bandlimited external-source fields]\label{cor:bandlimited}
If the external-source component $\f_+\in\mathcal H_+(\SS)$ is bandlimited with respect to its spherical harmonic expansion, i.e.,
\begin{align*}
 \f_+  \in \mathrm{span}\{\mathbf G^{\mathrm{ext}}_{n,k}: n\leq N,\,|k|\leq n\} 
\end{align*}
for some fixed integer $N$, then it satisfies the condition of Theorem~\ref{thm:unique_r}. Hence,  the restriction of $\mathcal A_{U}$ to $\mathrm{span}\{\mathbf G^{\mathrm{ext}}_{n,k}: n\leq N,\,|k|\leq n\}\times \mathcal H_{-}(\SS)$ is injective.
\end{corollary}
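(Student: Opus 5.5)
The plan is to show that every bandlimited external field lies in $\mathcal H_+^{(r)}(\SS)$ for every $r>1$ (indeed for all $r$). Injectivity then follows immediately from Theorem~\ref{thm:unique_r}, because $\mathrm{span}\{\mathbf G^{\mathrm{ext}}_{n,k}: n\leq N,\,|k|\leq n\}\times\mathcal H_-(\SS)$ is a subset of $\mathcal H_+^{(r)}(\SS)\times\mathcal H_-(\SS)$, and restricting an injective map to a subset keeps it injective.

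For the membership, write $\f_+=\sum_{n=1}^N\sum_{|k|\le n} a_{n,k}\mathbf G^{\mathrm{ext}}_{n,k}$. This is a finite sum; the $n=0$ term does not occur, since $\mathbf G^{\mathrm{ext}}_{0,0}=0$ by the zero degree remark. By \eqref{eq:gauss-vsh} we have $\f_+=\nabla u|_\SS$ with
\[
u(x)=\sum_{n=1}^N\sum_{|k|\le n}a_{n,k}V^{\mathrm{ext}}_{n,k}(x)=\sum_{n=1}^N\sum_{|k|\le n}a_{n,k}|x|^nY_{n,k}\bigl(\tfrac{x}{|x|}\bigr).
\]
Each $|x|^nY_{n,k}(x/|x|)$ is a homogeneous harmonic polynomial, so $u$ is a harmonic polynomial on all of $\R^3$. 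In particular $u$ is harmonic in $\BB_r$ and smooth up to $\SS_r$.

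It remains to check the two integrability conditions in Definition~\ref{def:Hplus_r}. Because $\nabla u$ is continuous on a neighbourhood of $\overline{\BB_r}$, the nontangential maximal function $(\nabla u)^M_+$ on $\SS_r$ is bounded by $\sup_{\overline{\BB_r}}|\nabla u|<\infty$, and hence lies in $L^2(\SS_r)^3$. The trace $u|_{\SS_r}=\sum a_{n,k}r^nY_{n,k}$ is a finite combination of spherical harmonics, hence smooth, and certainly in $H^1(\SS_r)$. Therefore $\f_+\in\mathcal H_+^{(r)}(\SS)$.

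No step here is a real obstacle. The only point needing care is to make sure the potential representation of $\f_+$ coincides with the one used in Definition~\ref{def:Hplus_r}, that is, that the boundary values of $\nabla u$ taken from inside $\BB_r$ restricted to $\SS$ agree with the interior nontangential limits defining $\mathcal H_+(\SS)$. This holds trivially because $u$ is smooth across $\SS$.
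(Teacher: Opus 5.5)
Your proof is correct and follows the paper's own reasoning. The paper just says Theorem~\ref{thm:unique_r} ``applies directly'': bandlimited external fields have harmonic-polynomial potentials, so they lie in $\mathcal H_+^{(r)}(\SS)$ for every $r>1$. You spell out the membership checks (the nontangential maximal function and the $H^1(\SS_r)$ trace) that the paper leaves implicit.
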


\begin{remark}[Geophysical interpretation]\label{rem:geo-nonunique2}
The above illustrates that geophysically reasonable assumptions can suffice to make internal--external field separation from regional data theoretically possible. Moreover, bandlimited signals expanded in terms of vector spherical harmonics, as commonly assumed in global geomagnetic modelling, also yield uniqueness (as indicated in Corollary \ref{cor:bandlimited}). However, bandlimitedness is usually an idealized assumption that is not satisfied by real geophysical fields, such that this setting should be viewed as a first approximate simplification rather than as a resolution of the underlying nonuniqueness. 

Slepian functions as, e.g., in \cite{plamazger24,plattnersimons14b,Plattner2017} provide a related bandlimited approach that additionally accounts for the spatial localization of the data and provides an intrinsic regularization. 
Spherical elementary current systems (SECS; see, e.g., \cite{Vanhamki2019,Pulkkinen2014}) provide another localized parametrization strategy, in which internal and external contributions are represented implicitly by finitely many elementary equivalent-current systems placed at prescribed locations or altitudes. Although the equivalent-current modelling underlying SECS differs from the potential-field framework considered here, the prescribed placement of the elementary systems can encode an
altitude-type source prior of the type used in Theorem \ref{thm:unique_r}. From the viewpoint of the present analysis, SECS-based approaches can therefore be viewed as finite-dimensional regularization strategies for the regional separation problem, while the underlying ill-posedness of the unrestricted problem remains a relevant consideration.

\end{remark}

\section{Instability and conditional stability for patches}\label{sec:instab}

Theorem~\ref{thm:unique_r} shows that an altitude/analyticity constraint can restore uniqueness. However, uniqueness does not imply stability in the infinite-dimensional function space setup. Even under the altitude/analyticity condition, the inverse problem remains ill-posed on a patch. The mechanism behind this instability is the following local density property: $\mathcal H_+(\SS)|_{U} \cap \mathcal H_-(\SS)|_{U}$ is dense in $\mathcal H_+(\SS)|_{U}$ with respect to the $L^2(U)^3$-norm. In particular, the same approximation property applies to $\mathcal H_+^{(r)}(\SS)|_{U}$ since $\mathcal H_+^{(r)}(\SS)\subset \mathcal H_+(\SS)$. Therefore, the inverse map of $\mathcal A_U^{(r)}$ cannot be bounded, even under the altitude/analyticity condition. Nevertheless, if one imposes additional source conditions on $\f_\pm$, one can still derive a quantitative estimate of logarithmic type.
\subsection{Instability despite uniqueness}\label{subsubsec:instability}

The density of $\mathcal H_+(\SS)|_{U} \cap \mathcal H_-(\SS)|_{U}$ in $\mathcal H_+(\SS)|_{U}$ follows from the following analysis of the nullspace of $\mathcal A_{U}$.

\begin{theorem}\label{thm:projnullspace}
  The space $\mathrm{proj}_{\mathcal H_+(\SS)}(\ker(\mathcal A_{U}))$ is dense in $\mathcal H_+(\SS)$. That is, for any $\f_+\in\mathcal H_+(\SS)$ and $\eps>0$, there exist $\tilde{\f}_\pm\in \mathcal H_\pm(\SS)$ such that
  \begin{align}\label{eqn:desity_1}
      (\tilde{\f}_++\tilde{\f}_-)|_U=0 && \text{and} &&\|\tilde{\f}_+-\f_+\|_{L^2(\SS)^3}\leq \eps.
  \end{align}
\end{theorem}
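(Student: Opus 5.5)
The plan is to start from the explicit description of $\ker(\mathcal A_U)$ that comes out of the proof of Theorem~\ref{thm:nonunique}, reduce the claim to a density statement for scalar densities on $\SS$, and prove that density by a duality plus unique continuation argument. The proof of Theorem~\ref{thm:nonunique} shows that $(B_+^\ast f, B_-^\ast g)\in\ker(\mathcal A_U)$ whenever $f+g=S^{-1}h$ with $h\in\widetilde H^1_{\text{const}}(V)$, $f|_U=[(K+\tfrac12)S^{-1}h]|_U$, and $f|_V$ arbitrary. Hence $\mathrm{proj}_{\mathcal H_+(\SS)}(\ker(\mathcal A_U))\supseteq B_+^\ast(\mathcal F)$, where
\[
\mathcal F=\big\{(K+\tfrac12)S^{-1}h : h\in\widetilde H^1_{\text{const}}(V)\big\}+L^2(V).
\]
Since $B_+^\ast$ is bounded and $\overline{\mathrm{Ran}(B_+^\ast)}=\mathcal H_+(\SS)$ by \eqref{eq:Hpm-id}, it suffices to show that $\mathcal F$ is dense in $L^2(\SS)$. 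Then $B_+^\ast(\mathcal F)$ is dense in $\overline{B_+^\ast(L^2(\SS))}=\mathcal H_+(\SS)$. Given $\f_+$ and $\eps$, one picks $f\in\mathcal F$ with $\|B_+^\ast f-\f_+\|\le\eps$ and sets $\tilde\f_+=B_+^\ast f$ and $\tilde\f_-=B_-^\ast(S^{-1}h-f)$.

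For the density of $\mathcal F$, I would argue by orthogonality. Let $q\in L^2(\SS)$ with $q\perp\mathcal F$.
\begin{itemize}
\item Orthogonality to $L^2(V)$ gives $q=0$ a.e. on $V$.
\item Orthogonality to the first summand gives $\langle q,(K+\tfrac12)S^{-1}h\rangle=0$ for all $h\in\widetilde H^1_{\text{const}}(V)$.
\end{itemize}
$S$ and $K$ are self-adjoint and diagonal in $\{Y_{n,k}\}$ by \eqref{eq:SK-eigs}, so I define the distribution $p=S^{-1}(K+\tfrac12)q\in H^{-1}(\SS)$. Coefficientwise this means $p_{n,k}=-(n+1)q_{n,k}$. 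The second condition then reads $\langle p,h\rangle=0$ for all $h\in H^1(\SS)$ constant on $U$. In particular this holds for all $h\in\widetilde H^1(V)$, which contains $C_c^\infty(V)$, so $p$ vanishes as a distribution on $V$.

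Next I introduce the decaying exterior harmonic function
\[
u(x)=\sum_{n,k}q_{n,k}\,|x|^{-(n+1)}Y_{n,k}(x/|x|),\qquad |x|>1 .
\]
Its trace on $\SS$ is $q$, and its radial derivative is $\sum_{n,k}-(n+1)q_{n,k}Y_{n,k}=p$. So both Cauchy data of $u$ vanish on the open set $V$. The key step is to conclude $u\equiv 0$, which gives $q=0$. To do this, I would extend $u$ by zero into the interior across $V$: the extended function is weakly harmonic in a neighbourhood of $V$, since the jump relations for trace and normal derivative are satisfied distributionally. By Weyl's lemma it is therefore harmonic, hence real-analytic, and it vanishes on an open set. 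Unique continuation then forces $u\equiv0$ in $\R^3\setminus\overline{\BB}$, exactly in the spirit of the Holmgren-type argument \cite[Lemma~1]{atfeh10} used in Theorem~\ref{thm:unique_r}. One should also keep track of the degree-zero term (the constant $h=1$), but it is consistent: it only imposes $p_{0,0}=-q_{0,0}$ orthogonal to $1$, which is absorbed in the same argument.

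The main obstacle I expect is this unique continuation step at low regularity. Here $p$ is only in $H^{-1}(\SS)$ and $q$ only in $L^2(\SS)$, so the Cauchy data are not classical. I would handle this by verifying the weak harmonicity of the zero-extension directly with test functions $\phi\in C_c^\infty$ supported near $V$. Concretely, I would write $\int u\,\Delta\phi$ over $|x|>1$ as a limit over $|x|>\rho$ with $\rho\downarrow1$ and use Green's formula on the spheres $\SS_\rho$. The $L^2$- and $H^{-1}$-convergence of the traces of $u$ and $\partial_r u$ on $\SS_\rho$ to $q$ and $p$ follows from the explicit series, and vanishing of both limits on $V$ gives $\int u\,\Delta\phi=0$.
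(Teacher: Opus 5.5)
Your proposal is correct. Its skeleton matches the paper's proof: you reduce via boundedness of $B_+^\ast$ to density of the admissible scalar densities, test against an orthogonal element, identify $(K+\tfrac12)S^{-1}$ as the multiplier $-(n+1)$ (the exterior Dirichlet-to-Neumann map), and conclude from vanishing Cauchy data on $V$ by unique continuation.

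You differ in how you handle the low regularity, since $q\in L^2(\SS)$ and $p\in H^{-1}(\SS)$. The paper mollifies with a zonal kernel supported in a cap of radius $\varepsilon$. That convolution is local and commutes with $S$ and $K$, so the smoothed data $T_\varepsilon\bar f^\ast$ and $(K+\tfrac12)S^{-1}T_\varepsilon\bar f^\ast$ vanish on the shrunken set $V_\varepsilon$. The classical jump relations for the single-layer potential $u_\varepsilon=\mathcal S S^{-1}\bar f^\ast_\varepsilon$ and the Holmgren-type lemma of \cite{atfeh10} then apply, and one lets $\varepsilon\to0$.

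You instead work with $q$ itself. Your series $u$ is exactly the unsmoothed counterpart of $u_\varepsilon$, namely the exterior harmonic function with trace $q$. Its traces on $\SS_\rho$ converge to $q$ in $L^2(\SS)$ and to $p$ in $H^{-1}(\SS)$. Green's identity on $\{|x|>\rho\}$ then shows that the zero extension is weakly harmonic near $V$, and Weyl's lemma plus analyticity finish the argument.

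Both routes are rigorous. Yours avoids the approximation step and the shrinking of $V$. It also does not need a local mollifier commuting with the layer operators, which the paper's subsequent remark names as the main obstacle to leaving the sphere (although you still use the spherical-harmonic series for the extension and for trace convergence). The paper's route has the advantage of only invoking classical smooth-density jump relations and an off-the-shelf uniqueness lemma.

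Two small points. First, choose the test functions supported in a ball $N$ with $N\cap\SS\subset V$, so that $\phi|_\SS$ and $\partial_r\phi|_\SS$ are compactly supported in $V$. Second, your degree-zero remark is unnecessary: testing with $h=1$ only yields the redundant fact $q_{0,0}=0$, and your argument, like the paper's, uses only $h\in\widetilde H^1(V)$.
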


\begin{proof}
Equation \eqref{eqn:desity_1} is a direct consequence of the mentioned density. Thus, we only need to prove that $\mathrm{proj}_{\mathcal H_+(\SS)}(\ker(\mathcal A_{U}))$ is dense in $\mathcal H_+(\SS)$. Since the operator $B_+^{\ast}$ is bounded from $L^2(\SS)$ to $L^2(\SS)^3$, it suffices to show that the scalar functions $f$ satisfying conditions \eqref{eqn:vanish_condition_2} and \eqref{eqn:vanish_condition_1} form a dense subset of $L^2(\SS)$ (also compare the kernel representation from Corollary \ref{cor:nullspace}). For this, no inversion of the operator $B_+^*$ is required. Thus, the spherical harmonic degree zero causes no obstruction, since $B_+^\ast 1=0$ and, hence, it is simply contained in the kernel of this parametrization.

Again, note that the two equations \eqref{eqn:vanish_condition_2} and \eqref{eqn:vanish_condition_1} impose no particular constraints on $f|_{V}$. According to Corollary \ref{cor:nullspace}, this means that we can choose $f$ freely on $V$. In this nullspace representation, $h$ may be chosen in $\widetilde H^1_{\mathrm{const}}(V)$. For the density argument below, it is enough to use the subspace $\widetilde H^1(V)\subset\widetilde H^1_{\mathrm{const}}(V)$. Via the choice $g=S^{-1}h-f$, equation \eqref{eqn:vanish_condition_2} is then always satisfied. Therefore, it remains to show that the restrictions $f|_{U}$, for $f$ satisfying~\eqref{eqn:vanish_condition_1}, are dense in $L^2(U)$. We denote this set by 
\begin{align}
    \tilde{M}=\left\{\, \big[(K^{\ast}+\tfrac{1}{2})S^{-1}h\big]\big|_{U} : h\in \widetilde H^1(V)\, \right\}\subset L^2(U).
\end{align}
We prove its density by showing that $\tilde{M}^{\perp}=\{0\}$. 

Let $f^{\ast}\in \tilde{M}^{\perp} \subset L^2(U)$, and denote by $\bar{f^{\ast}}\in L^2(\SS)$ the zero extension of $f^{\ast}$ to the full sphere $\SS$. We use the operator
\[
	A=(K^\ast+\tfrac12)S^{-1}:H^1(\SS)\longrightarrow L^2(\SS).
\]
On the sphere, $S$ and $K^\ast=K$ diagonalize in the spherical harmonic basis, more precisely,
\[
	A Y_{n,k}=-(n+1)Y_{n,k},\qquad n\ge0.
\]
Thus, $A$ is a first-order spherical harmonic multiplier with domain $H^1(\SS)$ when regarded as an unbounded operator in $L^2(\SS)$, and it extends continuously from $L^2(\SS)$ to $H^{-1}(\SS)$ by duality. Hence, for every admissible $h\in \widetilde H^1(V)$,
\begin{align}
    0&=\left\langle Ah, \bar{f^{\ast}}\right\rangle_{L^2(\SS)}
    =\left\langle h, A\bar{f^{\ast}} \right\rangle_{H^1(\SS),H^{-1}(\SS)}. \label{eqn:dual_main}
\end{align}
In particular, \eqref{eqn:dual_main} holds for all $h\in \widetilde H^1(V)$. Therefore
\begin{align}
	A\bar{f^{\ast}}=0 \quad \text{in } H^{-1}(V),\label{eqn:Afzerosobolev}
\end{align}
where $H^{-1}(V)$ is understood as $(\widetilde H^1(V))'$. while $\bar{f^{\ast}}=0$ in $L^2(V)$ by construction. It remains to show that these two Cauchy data force $\bar{f^{\ast}}=0$ on $\SS$.

Since $\bar f^{\ast}\in L^2(\SS)$, the density $\mu=S^{-1}\bar f^{\ast}$ is only in $H^{-1}(\SS)$.  We therefore regularize by a local zonal approximate identity. Let the operator $T_\varepsilon:L^2(\SS)\to L^2(\SS)$, or $T_\varepsilon:H^{-1}(\SS)\to H^{-1}(\SS)$ where applicable (in that case, the integral should be understood as the corresponding dual product), be given by
\begin{align}
	T_\varepsilon g(x)=\int_\SS k_\varepsilon(x\cdot y) g(y)d\sigma(y)\in C^\infty(\SS),\label{eqn:tepsrep}
\end{align} 
where $k_\varepsilon\in C^\infty([-1,1])$ is a smooth zonal kernel with $\textnormal{supp}(k_\varepsilon)\subset(\cos \varepsilon,1]$. 
We choose $k_\varepsilon$ such that $T_\varepsilon g\to g$ as $\varepsilon\to 0$. The latter convergence is meant in $L^2(\SS)$ for every $g\in L^2(\SS)$ and in $H^{-1}(\SS)$ for every $g\in H^{-1}(\SS)$. Since $T_\varepsilon$ is a zonal convolution operator, it is diagonal in the spherical harmonic basis and, therefore, commutes with the operators $S$, $S^{-1}$, and $K^\ast$ (which are spherical spectral multipliers) on their natural  domains.

Set $\bar f^{\ast}_\varepsilon=T_\varepsilon\bar f^{\ast}$ and
$\mu_\varepsilon=S^{-1}\bar f^{\ast}_\varepsilon$. Then, we get $\bar f^{\ast}_\varepsilon,\mu_\varepsilon\in C^\infty(\SS)$ and
\begin{align}
(K^{\ast}+\tfrac{1}{2})\mu_\varepsilon
=(K^{\ast}+\tfrac{1}{2})S^{-1}\bar f^{\ast}_\varepsilon
=T_\varepsilon\big((K^{\ast}+\tfrac{1}{2})S^{-1}\bar f^{\ast}\big).
\end{align}
Moreover, since $\bar f^{\ast}=0$ and $A\bar f^\ast=0$ on $V$ (remember property \eqref{eqn:Afzerosobolev}), locality of $T_\varepsilon$ gives
\[
	\bar f^{\ast}_\varepsilon=0,\qquad (K^{\ast}+\tfrac12)\mu_\varepsilon=0
	\quad\text{on}\quad
	V_\varepsilon=\{x\in V:\operatorname{dist}_{\SS}(x,\partial U)>\varepsilon\}.
\]
For sufficiently small $\varepsilon>0$, $V_\varepsilon$ contains a non-empty open patch. Next, set $u_\varepsilon=\mathcal{S}\mu_\varepsilon$ to be the single-layer potential of $\mu_\varepsilon$, in the sense
\begin{align}
	u_\varepsilon(x)=\mathcal{S}\mu_\varepsilon(x)=\frac{1}{4\pi}\int_\SS\frac{1}{|x-y|}\mu_\varepsilon(y)\,d\sigma(y),\qquad x\in\R^3\setminus\overline{\BB}.
\end{align} 
The classical jump relations for smooth single-layer densities imply
\[
	u_\varepsilon|_{\SS}=\bar f^{\ast}_\varepsilon,\qquad
	\partial_\eta^- u_\varepsilon=(K^{\ast}+\tfrac12)\mu_\varepsilon,
	\quad\text{on }\SS
\]
with the sign convention of \cite{BGK} that is used throughout this manuscript. Hence $u_\varepsilon=0$ and $\partial_\eta^- u_\varepsilon=0$ on $V_\varepsilon$. Since $u_\varepsilon$ is harmonic in $\R^3\setminus\overline{\BB}$ and vanishes at infinity, Cauchy uniqueness for harmonic functions across the analytic boundary patch $V_\varepsilon$ yields $u_\varepsilon=0$ in $\R^3\setminus\overline{\BB}$ (for instance via the Holmgren/analyticity argument in \cite[Lemma~1]{atfeh10}). Taking the exterior trace gives $\bar f^{\ast}_\varepsilon=0$ on $\SS$. Finally, $\bar f^{\ast}_\varepsilon\to\bar f^{\ast}$ in $L^2(\SS)$ as $\varepsilon\to0$, so that $\bar f^{\ast}=0$. Thus, $\tilde{M}^\perp=\{0\}$, and the desired density follows.

\end{proof}

\begin{corollary}\label{thm:instability}
The set $\mathcal H_+(\SS)|_{U} \cap \mathcal H_-(\SS)|_{U}$ is a dense subset of $\mathcal H_+(\SS)|_{U}$. That is, for any $\f_+\in\mathcal H_+(\SS)$ and $\eps>0$, there exists $\f_-\in \mathcal H_-(\SS)$, such that
  \begin{align}\label{eqn:desity_2}
      \|\f_+ - \f_-\|_{L^2(U)^3}\leq \eps.
  \end{align}
\end{corollary}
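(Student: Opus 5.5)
The corollary follows directly from Theorem~\ref{thm:projnullspace}. The plan is to take the approximating pair $(\tilde{\f}_+,\tilde{\f}_-)$ that the theorem provides, flip the sign of the internal part, and restrict to $U$. No new analytic ingredient is needed; all the work (the Cauchy-uniqueness/density argument) is already contained in the theorem.

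Concretely, fix $\f_+\in\mathcal H_+(\SS)$ and $\eps>0$. By Theorem~\ref{thm:projnullspace} there exist $\tilde{\f}_\pm\in\mathcal H_\pm(\SS)$ with
\[
(\tilde{\f}_++\tilde{\f}_-)|_U=0
\qquad\text{and}\qquad
\|\tilde{\f}_+-\f_+\|_{L^2(\SS)^3}\le\eps .
\]
Since $\mathcal H_-(\SS)$ is a linear space, we may set $\f_-:=-\tilde{\f}_-\in\mathcal H_-(\SS)$. On $U$ we then have $\f_-|_U=\tilde{\f}_+|_U$. Hence
\[
\|\f_+-\f_-\|_{L^2(U)^3}=\|\f_+-\tilde{\f}_+\|_{L^2(U)^3}\le\|\f_+-\tilde{\f}_+\|_{L^2(\SS)^3}\le\eps ,
\]
which is \eqref{eqn:desity_2}.

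For the set-theoretic density statement, observe that the element $\tilde{\f}_+|_U=\f_-|_U$ belongs to $\mathcal H_+(\SS)|_U$ (as the restriction of $\tilde{\f}_+$) and also to $\mathcal H_-(\SS)|_U$ (as the restriction of $\f_-$). So it lies in $\mathcal H_+(\SS)|_U\cap\mathcal H_-(\SS)|_U$ and is $\eps$-close in $L^2(U)^3$ to the arbitrary element $\f_+|_U$ of $\mathcal H_+(\SS)|_U$. This gives density.

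The only point that needs care is the sign convention: the theorem produces fields whose \emph{sum} vanishes on $U$, whereas the corollary asks for a \emph{difference} to be small. This is resolved by the sign flip above. Restricting from $\SS$ to $U$ only decreases the $L^2$ norm, so the error bound carries over unchanged.
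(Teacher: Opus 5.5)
Your proposal is correct and follows the paper's proof: the paper also takes $\f_-=-\tilde{\f}_-$ from Theorem~\ref{thm:projnullspace} and bounds $\|\f_+-\f_-\|_{L^2(U)^3}$ by $\|\tilde{\f}_+-\f_+\|_{L^2(\SS)^3}\le\eps$. The only difference is that the paper uses a triangle inequality with the vanishing term $\|\tilde{\f}_++\tilde{\f}_-\|_{L^2(U)^3}$ where you use the identity $\f_-|_U=\tilde{\f}_+|_U$ directly, and you additionally spell out the set-level density, which the paper leaves implicit.
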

\begin{proof}
    We can simply choose $\f_-=-\tilde{\f}_-$ where $\tilde{\f}_+$, $\tilde{\f}_-$ indicate the functions stated in~\eqref{eqn:desity_1}. Then it holds 
    \begin{align*}
         \|\f_+ - \f_-\|_{L^2(U)^3}&=  \|\f_+ + \tilde{\f}_-\|_{L^2(U)^3} \leq \|\f_+ - \tilde{\f}_+\|_{L^2(U)^3} + \|\tilde{\f}_+ + \tilde{\f}_-\|_{L^2(U)^3}
         \\&=\|\tilde{\f}_+ - \f_+\|_{L^2(U)^3}\leq \|\tilde{\f}_+-\f_+\|_{L^2(\SS)^3}\leq \eps,
    \end{align*}
    which is the desired statement.
\end{proof}

\begin{remark}
    The commutation requirement for the mollifier and the single- and double-layer operators at the end of the proof of Theorem \ref{thm:projnullspace} is a main obstacle to extending the previous results to general $C^2$-smooth surfaces or to Lipschitz surfaces.
\end{remark}

Since $\mathcal H_+^{(r)}(\SS)\subset \mathcal H_+(\SS)$, equation~\eqref{eqn:desity_2} holds in particular for any $\f_+\in \mathcal H_+^{(r)}(\SS)$. This means that, even under the altitude/analyticity condition, internal-external field separation cannot be stable.

\begin{corollary}[Instability on patches]\label{thm:instabilityforHr}
There exists \emph{no} continuous function $\Phi:\R_+\to\R_+$ with $\lim_{\varepsilon\to 0}\Phi(\varepsilon)=0$ such that
\begin{align}
\|\f_+\|_{L^2(\SS)^3}+\|\f_-\|_{L^2(\SS)^3}
\le \Phi\Big(\|(\f_+ + \f_-)|_U\|_{L^2(U)^3}\Big),
\quad\text{for all }\f_+\in \mathcal H_+^{(r)}(\SS),\ \f_-\in \mathcal H_-(\SS).
\end{align}
\end{corollary}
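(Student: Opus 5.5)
We argue by contradiction and use the density statement of Corollary~\ref{thm:instability}. Suppose such a continuous $\Phi$ with $\Phi(\varepsilon)\to0$ exists. We construct a sequence of admissible pairs $(\f_+^{(j)},\f_-^{(j)})\in\mathcal H_+^{(r)}(\SS)\times\mathcal H_-(\SS)$ whose combined $L^2(\SS)^3$-norm stays bounded away from zero while the restriction of their sum to $U$ tends to zero. Then the right-hand side tends to zero and the left-hand side does not, which is the contradiction.

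For the construction, fix any nonzero $\f_+\in\mathcal H_+^{(r)}(\SS)$, for instance $\f_+=\mathbf G^{\mathrm{ext}}_{1,0}$. Its potential $rY_{1,0}(\xi)$ is a harmonic polynomial, so it belongs to $\mathcal H_+^{(r)}(\SS)$ for every $r>1$ (compare Corollary~\ref{cor:bandlimited}). By Corollary~\ref{thm:instability}, which applies because $\mathcal H_+^{(r)}(\SS)\subset\mathcal H_+(\SS)$, for each $j\in\mathbb N$ there is $\mathbf g_j\in\mathcal H_-(\SS)$ with $\|\f_+-\mathbf g_j\|_{L^2(U)^3}\le 1/j$. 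We set $\f_+^{(j)}=\f_+$ and $\f_-^{(j)}=-\mathbf g_j\in\mathcal H_-(\SS)$. Then
\[
\|(\f_+^{(j)}+\f_-^{(j)})|_U\|_{L^2(U)^3}\le 1/j\to0,
\]
while
\[
\|\f_+^{(j)}\|_{L^2(\SS)^3}+\|\f_-^{(j)}\|_{L^2(\SS)^3}\ge\|\f_+\|_{L^2(\SS)^3}>0
\]
for all $j$.

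Now insert these pairs into the assumed estimate. This gives $0<\|\f_+\|_{L^2(\SS)^3}\le\Phi(\delta_j)$ with $\delta_j:=\|(\f_+^{(j)}+\f_-^{(j)})|_U\|_{L^2(U)^3}\to0$. Since $\Phi(\varepsilon)\to0$ as $\varepsilon\to0$, the right-hand side tends to $0$, a contradiction. One small care point: $\delta_j$ might equal $0$, where $\Phi$ is only given on $\R_+$. If $\R_+$ excludes $0$, we simply use $\limsup_{\varepsilon\to0^+}\Phi(\varepsilon)=0$ along the values $\delta_j>0$. If some $\delta_j=0$, then $(\f_+,-\mathbf g_j)$ is a nonzero element of $\ker\mathcal A_U^{(r)}$, which contradicts Theorem~\ref{thm:unique_r}. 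So in fact $\delta_j>0$ automatically.

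There is no genuine obstacle: the hard analytic work, the density of $\mathcal H_+|_U\cap\mathcal H_-|_U$, is already contained in Theorem~\ref{thm:projnullspace} and Corollary~\ref{thm:instability}. The only point to check is that the fixed external field really lies in the altitude class $\mathcal H_+^{(r)}(\SS)$, and not merely in $\mathcal H_+(\SS)$. This is why we choose a bandlimited (polynomial) external field. Note also that no altitude constraint is imposed on the internal component $\f_-^{(j)}$, which is exactly what lets the density result be used.
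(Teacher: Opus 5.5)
Your proof is correct and follows the paper's route: the paper only remarks that the density statement of Corollary~\ref{thm:instability} applies to any $\f_+\in\mathcal H_+^{(r)}(\SS)\subset\mathcal H_+(\SS)$, and you spell out the resulting contradiction by fixing a nonzero bandlimited $\f_+$ and pairing it with $-\mathbf g_j$. Your additional check that $\delta_j>0$ via Theorem~\ref{thm:unique_r} is a harmless refinement that settles how $\Phi$ is evaluated at $0$.
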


\begin{remark}[Implications for practice]\label{rem:practice}
The previous results show that localized internal-external potential field separation remains challenging in practice. An altitude/analyticity assumption can remove the ambiguity of non-uniqueness, but small perturbations of patch data may still cause large changes in the separated components. A combination of additional prior information and/or regularization methods is, therefore, typically necessary. 
\end{remark}

\subsection{A quantitative conditional estimate}

Instability of harmonic continuation as well as general elliptic Cauchy problems has long been known and quantitative estimates have been supplied, e.g., in \cite{aleron09,bou10,isakov12,ruesal19}. Since harmonic continuation is also the underlying driver of the internal-external field separation problem, we can make use of these results in our setup. In the following, we provide a conditional logarithmic stability estimate for the source-free shell $A_r$. To formulate the required additional source condition, we use the single-layer representation as indicated after Definition~\ref{def:Hplus_r}: for $\f_+=\nabla u_+|_\SS\in \mathcal H_+^{(r)}(\SS)$, we write
\[
	u_+(x)=\frac{1}{4\pi}\int_{\SS_r}\frac{\varphi_+(y)}{|x-y|}\,d\sigma(y),
	\qquad x\in\BB_r,
\]
with $\varphi_+=S_r^{-1}(u_+|_{\SS_r})\in L^2(\SS_r)$. The condition $\|\varphi_+\|_{L^2(\SS_r)}\le M$ in Theorem~\ref{thm:condstab} states an a priori bound on the harmonic continuation of $u_+$ in the annulus $A_r$.
In addition, the estimate below uses a Sobolev source prior
$\|\f_-\|_{H^\tau(\SS)^3}\le M$, with $\tau\geq\frac12$, on the internal-source
contribution. This condition guarantees the $H^2(A_{1+s})$ a priori regularity required
by the elliptic Cauchy stability estimate from Lemma \ref{lem:carleman} and excludes arbitrarily oscillatory internal fields.

\begin{theorem}[Conditional logarithmic stability on patches with a Sobolev source prior] \label{thm:condstab}
{
	Fix an arbitrary $\alpha\in(0,1)$, an arbitrary $\tau\geq\frac12$, and $M>0$. Then, for every $r>1$, there exist constants $C_1=C_1(r,U,\alpha,\tau)>0$, $C_2=C_2(r,U,\alpha,\tau)>0$, and a threshold $\delta=\delta(r,U,\alpha,\tau,M)>0$ such that the following holds true:
    for all $\f_+\in \mathcal H_+^{(r)}(\SS)$, $\f_-\in \mathcal H_-(\SS)$ that satisfy the source conditions
	\[
		\|\f_-\|_{H^\tau(\SS)^3}\le M,
		\qquad
		\|\varphi_+\|_{L^2(\SS_r)}\le M,
		\qquad
		\|(\f_+ + \f_-)|_U\|_{L^2(U)^3}<\delta,
	\]
	we have
	\begin{align}
		\|\f_+\|_{L^2(\SS)^3}+\|\f_-\|_{L^2(\SS)^3}
		\le \Phi\left(\|(\f_+ + \f_-)|_U\|_{L^2(U)^3}\right),\label{eqn:logstab}
	\end{align}
	with
	\begin{align}
		\Phi(t)=
		\begin{cases}
		C_1 M\left|\ln\left(\dfrac{C_2 M}{t}\right)\right|^{-\alpha\vartheta_\tau},
		& 0<t<\delta,\\[0.3em]
		0, & t=0,
		\end{cases}
		\label{eqn:logstab2}
	\end{align}
	where $\vartheta_\tau=\frac{\tau}{\tau+\frac12}\in(0,1)$.
}
\end{theorem}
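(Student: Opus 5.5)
Our plan is to reduce the estimate to a quantitative Cauchy problem (harmonic continuation) for $w=u_++u_-$ in the source-free shell, and then recover $\f_+$ and $\f_-$ individually from the global orthogonal splitting on $\SS$. We proceed in four steps.

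\textbf{Step 1: regularity of $w$ in a thin shell.} Fix $s\in(0,r-1)$, say $s=(r-1)/2$, so that $w$ is harmonic in $A_{1+s}$ and extends across $\SS_{1+s}$.
\begin{itemize}
\item The single-layer representation of $u_+$ on $\SS_r$ gives a strictly interior estimate $\|u_+\|_{H^2(A_{1+s})}\le C(r)\|\varphi_+\|_{L^2(\SS_r)}$. In spherical harmonics, the coefficients of $u_+$ are damped by $(|x|/r)^n$ with $|x|\le 1+s<r$.
\item For $u_-$, its Gauss coefficients relate to those of $\f_-$ by factors $\sim n^{-1}$, with radial decay $|x|^{-(n+1)}$. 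A standard multiplier computation gives $\|u_-\|_{H^{\tau+3/2}(A_{1+s})}\lesssim\|\f_-\|_{H^\tau(\SS)^3}$.
\item Since $\tau\ge\frac12$, this yields $\|u_-\|_{H^2(A_{1+s})}\le CM$, so $\|w\|_{H^2(A_{1+s})}\le CM$ after normalizing additive constants.
\end{itemize}

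\textbf{Step 2: Cauchy data on $U$ and the Carleman-type estimate.} The Cauchy data of $w$ on $U$ are $\partial^+_\eta w=(\f_++\f_-)\cdot\eta$ and $\grads w=(\f_++\f_-)_T$. Subtract the mean of $w$ over $U$ and use Poincar\'e on $U$. Then the $H^{1/2}$/$L^2$-type Cauchy data norms are bounded by $C\,t$ with $t=\|(\f_++\f_-)|_U\|_{L^2(U)^3}$. Some care is needed because $L^2$ tangential data controls $w$ only in $H^1(U)$ and the normal derivative only in $L^2$; we will use the version of the estimate with these norms, or interpolate against the $H^2$ bound.

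Next apply the elliptic Cauchy stability estimate (Lemma \ref{lem:carleman}, following \cite{aleron09,bou10}) on the Lipschitz domain $A_{1+s}$ with accessible part $U$. This gives a global logarithmic bound
\[
\|w\|_{H^1(A_{1+s})}\le C M\,\big|\ln(C_2M/t)\big|^{-\alpha}
\]
for $t<\delta$. The exponent $\alpha<1$ comes from the $H^2$ a priori bound via the usual interpolation in the estimate.

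\textbf{Step 3 (main obstacle): passing to the trace on the full sphere.} We need $\|\nabla w|_{\SS}\|_{L^2(\SS)^3}$, i.e.\ $\|\f_++\f_-\|_{L^2(\SS)^3}$, on all of $\SS$, but the bound from Step 2 is only an $H^1$ bound in the shell. The trace of $\nabla w$ lies in $H^{-1/2}$, so it is controlled by $C M|\ln|^{-\alpha}$ only in that weak norm. We then interpolate between $H^{-1/2}(\SS)$ and the a priori $H^\tau(\SS)$ bound on $\f_-$; the $\f_+$ part is smooth on $\SS$ with norms $\le C(r)M$. This gives
\[
\|g\|_{L^2}\le \|g\|_{H^{-1/2}}^{\tau/(\tau+1/2)}\,\|g\|_{H^\tau}^{(1/2)/(\tau+1/2)},
\]
which produces exactly the exponent $\alpha\vartheta_\tau$ with $\vartheta_\tau=\tau/(\tau+\frac12)$. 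Hence
\[
\|\f_++\f_-\|_{L^2(\SS)^3}\le C_1M\,|\ln(C_2M/t)|^{-\alpha\vartheta_\tau}.
\]
Making the $H^{-1/2}$ trace bound rigorous on the whole sphere, including near $\partial U$ and on $V$, is the delicate point. The constants from the Cauchy estimate also depend on $U$ and $s$, hence on $r$.

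\textbf{Step 4: separating the components.} Since $\mathcal H_+(\SS)\perp\mathcal H_-(\SS)$ in $L^2(\SS)^3$,
\[
\|\f_+\|^2+\|\f_-\|^2=\|\f_++\f_-\|^2 .
\]
Therefore $\|\f_+\|+\|\f_-\|\le\sqrt2\,\|\f_++\f_-\|$, which gives \eqref{eqn:logstab}. The case $t=0$ follows from Theorem \ref{thm:unique_r}. The threshold $\delta$ is chosen so that $C_2M/t>e$, which keeps the logarithm positive and monotone.
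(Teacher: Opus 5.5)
Your proposal is correct and follows essentially the same route as the paper. Both arguments mean-normalize $w$ on $U$ and apply Poincar\'e, get an $H^2$ shell bound from the single-layer density and the $H^\tau$ prior with $\tau\ge\frac12$, invoke the Bourgeois-type logarithmic Cauchy estimate to control $\nabla w|_\SS$ in $H^{-1/2}(\SS)^3$, interpolate against the $H^\tau$ bounds to obtain the exponent $\alpha\vartheta_\tau$, and finish with the orthogonality of $\mathcal H_\pm(\SS)$.

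The step you flag as delicate is in fact routine. The Cauchy estimate controls $\widetilde w$ in $H^1$ of the whole shell, so the trace theorem and the weak Neumann trace (Green's formula with an $H^1$ extension vanishing on the outer sphere) bound $\nabla w|_\SS$ in $H^{-1/2}(\SS)^3$ on all of $\SS$, including $V$, exactly as in the paper's Lemma~\ref{lem:carleman}.
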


In contrast to Corollary~\ref{thm:instabilityforHr}, Theorem~\ref{thm:condstab} requires two source conditions: the altitude or analyticity bound $\|\varphi_+\|_{L^2(\SS_r)}\leq M$ for the external-source contribution and the Sobolev bound $\|\f_-\|_{H^\tau(\SS)^3}\leq M$ with $\tau\geq\frac12$ for the internal-source contribution. The latter condition has two roles. First, it provides the $H^2$ a priori bound on the harmonic continuation in the shell that is required in the logarithmic Cauchy estimate of \cite[Cor.~2.1]{bou10}. Second, it allows us to upgrade the resulting weak boundary stability estimate to an $L^2(\SS)^3$-estimate by Sobolev interpolation. Accordingly, the logarithmic exponent is reduced from $\alpha$ to $\alpha\vartheta_\tau$. 

Under this assumption, Theorem~\ref{thm:condstab} provides a quantitative estimate for the separation error. The key point is that the control is only of logarithmic type, through the function $\Phi(t)\sim |\ln t|^{-\alpha\vartheta_\tau}$. Thus, improving the data fit on the observation patch leads to only a very slow improvement in the recovered internal and external fields, rendering the localized separation problem practically unstable even under the altitude/analyticity condition.

\begin{proof}
	Let $\f_+=\nabla u_+|_\SS$ and $\f_- =\nabla u_-|_\SS$. As in the proof of Theorem \ref{thm:unique_r}, the function $w=u_++u_-$ is harmonic in the annulus $A_r$. For convenience, we set
	\begin{align}
		\varepsilon=\|\nabla w|_U\|_{L^2(U)^3}=\|(\f_++\f_-)|_U\|_{L^2(U)^3}.
	\end{align}
	We normalize $w$ by subtracting its patch average, i.e., we set
	\[
		\widetilde w=w-c_U ,\qquad c_U=\frac{1}{|U|}\int_U w(y)\,d\sigma(y).
	\]
	Then, \(\nabla\widetilde w=\nabla w\) in \(A_r\), \(\partial_\eta^+\widetilde w=\partial_\eta^+w\) on \(U\), and \(\int_U\widetilde w\,d\sigma=0\). Splitting the boundary gradient into tangential and normal components yields
	\[
		\|\nabla_\SS \widetilde w|_U\|_{L^2(U)^3}\le \varepsilon,
		\qquad
		\|\partial_\eta^+ \widetilde w|_U\|_{L^2(U)}\le \varepsilon.
	\]
		By the Poincar\'e inequality on the patch \(U\), this implies
		\begin{align}
			\|\widetilde w\|_{H^1(U)}\le C_U\,\varepsilon,
			\qquad
			\|\partial_\eta^+ \widetilde w\|_{L^2(U)}\le \varepsilon, \qquad \|\widetilde w\|_{H^1(U)}+\|\partial_\eta^+\widetilde w\|_{L^2(U)}
		\le (C_U+1)\varepsilon ,\label{eqn:smallnessest}
		\end{align}
     where the constant $C_U>0$ may depend on the patch $U$.
    
    Next, we derive estimates on the Sobolev norms of $u_+$ and $u_-$ in the annulus $A_{1+s}\subset A_r$, for $s=\tfrac{r-1}{4}$. For any $x\in A_{1+s}$ and $y\in \SS_r$ it holds $|x-y|\geq r-(1+s)=\tfrac{3}{4}(r-1)=\tilde{s}$. Observing that the surface area of the sphere $\SS_r$ is $|\SS_r|=4\pi r^2$, Cauchy's inequality and the assumption on $\varphi_+$ yield that, for $x\in A_{1+s}$,
	\begin{align}
		|u_+(x)|&\leq \int_{\SS_r}\frac{|\varphi_+(y)|}{|x-y|}d\sigma(y)\leq \frac{\sqrt{4\pi} r}{\tilde s}\|\varphi_+\|_{L^2(\SS_r)}\leq \frac{3\sqrt{4\pi} r}{\tilde{s}} M,
		\\|\nabla u_+(x)|&\leq \int_{\SS_r}\frac{|\varphi_+(y)|}{|x-y|^2}d\sigma(y)\leq \frac{\sqrt{4\pi} r}{\tilde{s}^2} M.
	\end{align}
	Integrating over the annulus $A_{1+s}$ yields
    \begin{align}
			\|u_+\|_{H^1(A_{1+s})}^2&\leq \frac{4\pi}{3\tilde{s}^4}\left((1+s)^3-1\right)(4\pi+36\pi\tilde{s}^2)r^2M^2\leq C_+(r)^2M^2\frac{1}{r-1},\label{eqn:uplusest}
	\end{align}
    for a constant $C_+(r)\geq c_0$, with $c_0>0$ being fixed and neither depending on $U$ nor $r$.
    
    Since $H^\tau(\SS)^3$ is continuously embedded in $ L^2(\SS)^3$, the assumption $\|\f_-\|_{H^\tau(\SS)^3}\le M$ implies $\|\f_-\|_{L^2(\SS)^3}\le \tilde{C}_- M$. Hence the spherical harmonic computation in Appendix~\ref{app:aux}  gives
    \begin{align}
			\|u_-\|_{H^1(A_{1+s})}^2&\leq  C_-(r)^2M^2,\label{eqn:uminusest}
	\end{align}
    for a constant $C_-(r)>0$. 
    
Hence, for the original function \(w=u_++u_-\), the estimates \eqref{eqn:uplusest} and \eqref{eqn:uminusest} provide
	\begin{align}
		\|w\|_{H^1(A_{1+s})}
		\le \|u_+\|_{H^1(A_{1+s})}+\|u_-\|_{H^1(A_{1+s})}
		\le (C_+(r)+C_-(r))\frac{M}{\sqrt{r-1}}.
	\end{align}
	The trace theorem provides us with a constant $C_{\rm tr}(r,U)\geq c_0$ such that
	\[
		|c_U|\le |U|^{-1/2}\|w\|_{L^2(U)}
		\le C_{\rm tr}(r,U)\|w\|_{H^1(A_{1+s})}.
	\]
	Combining the previous two estimates leads to 
	\begin{align}
		\|\widetilde w\|_{H^1(A_{1+s})}=\|w-c_U\|_{H^1(A_{1+s})}
		\le C_0(r,U)\frac{M}{\sqrt{r-1}},\label{eqn:tildewest}
	\end{align}
	where \(C_0(r,U)>0\) absorbs the constants $C_{\rm tr}(r,U)$, \(C_+(r)\), and \(C_-(r)\).

 We next record the $H^2$ a priori estimate that is needed in order to apply the logarithmic Cauchy stability estimate of \cite[Cor.~2.1]{bou10}. For the external-source potential $u_+$, differentiating the single-layer representation up to second order and using the positive distance between $A_{1+s}$ and $\SS_r$ gives
\[
\|u_+\|_{H^2(A_{1+s})}\le C_{+,2}(r)M.
\]
For the internal-source potential $u_-$, the Sobolev source prior gives the
corresponding shell estimate
\[
    \|u_-\|_{H^2(A_{1+s})}\le C_{-,2}(r,\tau)M,
    \qquad \tau\geq\frac12 .
\]
This follows from the same spherical-harmonic estimates as in Appendix~\ref{app:aux},
with two additional derivatives. The condition $\tau\geq1/2$ supplies precisely
the summability needed to control the $H^2$-norm in the exterior collar. Together with the estimate of the constant $c_U$ above, we obtain
\begin{align}
\|\widetilde w\|_{H^2(A_{1+s})}\le \tilde{C}_{0}(r,U,\tau)M.\label{eqn:h2prior-wtilde}
\end{align}

For the given $\alpha\in(0,1)$, let $C=C(r,U,\alpha)>0$ and $\delta_0=\delta_0(r,U,\alpha,\tilde{C}_{0}M)>0$ be the constants obtained from Lemma~\ref{lem:carleman}, applied with the a priori bound $\tilde{M}=\tilde{C}_{0}(r,U,\tau)M$. We choose $\delta=\delta(r,U,\alpha,\tau,M)>0$ sufficiently small so that $(C_U+1)\delta<\delta_0$ and $\frac{c\,\tilde{C}_{0}M}{(C_U+1)\varepsilon}>1$
for all $0<\varepsilon<\delta$. Then Lemma~\ref{lem:carleman}, together with \eqref{eqn:smallnessest} and \eqref{eqn:h2prior-wtilde}, gives the weak boundary estimate
	\begin{align}
		\|\nabla w|_\SS\|_{H^{-1/2}(\SS)^3}
		=\|\nabla\widetilde w|_\SS\|_{H^{-1/2}(\SS)^3}
		\le C(r,U,\alpha)\,\tilde{C}_{0}(r,U,\tau)M
		\left| \ln\left(\frac{\tilde{C}_{0}(r,U,\tau)M}{(C_U+1)\varepsilon}
		\right)
		\right|^{-\alpha}.\label{eqn:nablaw-est}
	\end{align}

 It remains to upgrade this weak boundary estimate to an $L^2(\SS)^3$ estimate. For every fixed $\tau\geq\frac12$, the altitude/analyticity condition and the bound $\|\varphi_+\|_{L^2(\SS_r)}\le M$ imply, by the spherical harmonic representation of the single-layer potential on $\SS_r$, that
\[
\|\f_+\|_{H^\tau(\SS)^3}\le C_{+,\tau}(r)M.
\]
Together with the source condition $\|\f_-\|_{H^\tau(\SS)^3}\le M$, this yields, 
\begin{align}
\|\nabla w|_\SS\|_{H^\tau(\SS)^3}
\le (1+C_{+,\tau})M.\label{eqn:htau-prior}
\end{align}
By Sobolev interpolation between $H^{-1/2}(\SS)^3$ and $H^\tau(\SS)^3$,
\[
\|\nabla w|_\SS\|_{L^2(\SS)^3}
\le C_{\rm int}(\tau)\,
\|\nabla w|_\SS\|_{H^{-1/2}(\SS)^3}^{\vartheta_\tau}
\|\nabla w|_\SS\|_{H^\tau(\SS)^3}^{1-\vartheta_\tau},
\qquad
\vartheta_\tau=\frac{\tau}{\tau+\frac12}.
\]
Combining this interpolation estimate with \eqref{eqn:nablaw-est} and \eqref{eqn:htau-prior} gives (with the constants $C_{\mathrm{int}}$, $C$, $\tilde{C}_0$, $C_{+,\tau}$ being absorbed into the constant $\tilde{C}_1$)
\begin{align}
\|\f_++\f_-\|_{L^2(\SS)^3}
\le \tilde{C}_1(r,U,\alpha,\tau)M
\left|\ln\left(\frac{C_2(r,U,\alpha,\tau)M}{\varepsilon}\right)\right|^{-\alpha\vartheta_\tau}.\label{eqn:l2g-est}
\end{align}
	Finally, on the sphere we have $\f_+\in \mathcal H_+(\SS)$, $\f_-\in \mathcal H_-(\SS)$ and $\mathcal H_+(\SS)\perp \mathcal H_-(\SS)$, it follows that $\|\f_++\f_-\|_{L^2(\SS)^3}^2=\|\f_+\|_{L^2(\SS)^3}^2+\|\f_-\|_{L^2(\SS)^3}^2$ and, therefore,
	\[
		\|\f_+\|_{L^2(\SS)^3}+\|\f_-\|_{L^2(\SS)^3}
		\le \sqrt{2}\,\|\nabla w|_\SS\|_{L^2(\SS)^3}.
	\]
	Combining this with \eqref{eqn:l2g-est}, and choosing $C_1=\sqrt{2}\tilde{C}_1$, yields the desired estimate \eqref{eqn:logstab}. The case $\varepsilon=0$ follows from Theorem~\ref{thm:unique_r}.
    
\end{proof}

\begin{remark}[Geophysical interpretation of the conditional estimate] Theorem~\ref{thm:condstab} should be read as a conditional stability statement. The source-free shell \(A_r\) represents the assumption that the external current systems are separated from the observation surface by a positive altitude. The additional Sobolev bound with \(\tau\geq1/2\) is a mild smoothness prior on the internal-source contribution. This threshold is mainly technical: it provides the boundary regularity needed to pass from weak Cauchy stability to an \(L^2\)-estimate on the separated fields. From a modelling point of view, this is a mild restriction: it excludes arbitrarily oscillatory internal fields, but it is consistent with standard smoothness assumptions or bandlimited representations. 

Under these priors, a small data mismatch on \(U\) implies only a logarithmically small global separation error. Thus, the regional separation problem remains strongly ill-conditioned in practice, even in the uniqueness regime. The constants depend on the shell thickness, the patch geometry, and the chosen smoothness prior. Their precise behaviour as \(r\downarrow1\), i.e., as it approaches the nonuniqueness regime, is not tracked here. \end{remark}

\section{Discussion: implications for geomagnetic modelling}\label{sec:discussion}

The results of Section~\ref{sec:localized} sharpen a basic distinction between the global and regional data in the internal-external potential field separation problem. For global data on a closed surface, the boundedness of the Hardy-Hodge projections yields a stable separation into external-source, internal-source, and tangential divergence-free components. In contrast, if measurements are available only on a proper surface patch (as is the case, e.g., for continental-scale arrays), Theorem~\ref{thm:nonunique} shows that internal and external contributions need not be determined uniquely from the local data. Thus, the localized problem is not merely a truncated version of the global one but a problem with a genuinely different structural property.

A physically natural way to reduce this ambiguity is to incorporate prior information on the location of the external sources. This motivates the altitude/analyticity constraint encoded by the class $\mathcal H_+^{(r)}(\SS)$ in Definition~\ref{def:Hplus_r}. In the unit-sphere normalization, $r-1$ measures the radial gap between the observation surface and the nearest allowed external sources. Under such a constraint, Theorem~\ref{thm:unique_r} establishes uniqueness of the localized separation, and a bandlimited external-source field assumption provides a particularly restrictive special case (Corollary~\ref{cor:bandlimited}).

Uniqueness, however, does not imply stability. Corollaries~\ref{thm:instability} and~\ref{thm:instabilityforHr} show that even under an altitude/analyticity constraint, localized separation remains ill-posed: small perturbations of patch data may correspond to large changes in the separated components. Consequently, any practical separation workflow for patch data necessarily relies on additional priors or regularization. Common choices in geomagnetic practice include spectral truncation/bandlimiting, parametric external-field models, minimum-energy or smoothness penalties, spatio-spectral localization. 

We emphasize a point of interpretation: throughout this paper, “internal/external” refers to source location relative to the observation surface, not to a classification by physical processes. In particular, for time-varying geomagnetic fields, the internal component may largely represent the induced (secondary) field generated within the conducting Earth in response to primary external current systems. Conversely, the same current system may be classified differently depending on the observation altitude: for example, ionospheric currents are often treated as external with respect to ground observations but as internal with respect to low-Earth-orbit satellite measurements. Combining such complementary observation modalities is an appealing topic, but it lies beyond the scope of the present analysis.

\section*{Acknowledgements}
The authors are grateful to the anonymous referee for very valuable comments on an earlier version of this manuscript.

\small
\bibliography{example}

@InCollection{Vanhamki2019,
  author    = {Vanham\"{a}ki,  H. and Juusola,  L.},
  booktitle = {Ionospheric Multi-Spacecraft Analysis Tools},
  publisher = {Springer},
  title     = {Introduction to Spherical Elementary Current Systems},
  year      = {2020},
  editor    = {Dunlop, M. and L\"uhr, H. },
}

@article{Pulkkinen2014,
  title = {Separation of the geomagnetic variation field on the ground into external and internal parts using the spherical elementary current system method},
  volume = {55},
  ISSN = {1880-5981},
  url = {http://dx.doi.org/10.1186/BF03351739},
  DOI = {10.1186/bf03351739},
  number = {3},
  journal = {Earth,  Planets and Space},
  publisher = {Springer Science and Business Media LLC},
  author = {Pulkkinen,  A. and Amm,  O. and Viljanen,  A. and BEAR working group },
  year = {2003},
  month = June,
  pages = {117–129}
}

@article{Torta2019,
  title = {Modelling by Spherical Cap Harmonic Analysis: A Literature Review},
  volume = {41},
  ISSN = {1573-0956},
  url = {http://dx.doi.org/10.1007/s10712-019-09576-2},
  DOI = {10.1007/s10712-019-09576-2},
  number = {2},
  journal = {Surveys in Geophysics},
  publisher = {Springer Science and Business Media LLC},
  author = {Torta,  J. M.},
  year = {2019},
  month = nov,
  pages = {201–247}
}

@Book{backus1996,
  author    = {Backus, G. and Parker, R. and Constable, C.},
  title     = {Foundations of Geomagnetism},
  isbn      = {978-0521017336},
  note      = {Paperback},
  publisher = {Cambridge University Press},
  address   = {Cambridge},
  isbn-10   = {0521017335},
  year      = {1996},
}

@Article{Alken2021,
  author    = {Alken, P. and Thébault, E. and Beggan, C. and others},
  title     = {International Geomagnetic Reference Field: the thirteenth generation},
  doi       = {10.1186/s40623-020-01288-x},
  issn      = {1880-5981},
  url       = {http://dx.doi.org/10.1186/s40623-020-01288-x},
  volume    = {73},
  journal   = {Earth, Planets and Space},
  publisher = {Springer Science and Business Media LLC},
  year      = {2021},
}

@Book{Campbell2003,
  author    = {Campbell, W. H.},
  title     = {Introduction to Geomagnetic Fields},
  doi       = {10.1017/cbo9781139165136},
  isbn      = {9781139165136},
  publisher = {Cambridge University Press},
  url       = {http://dx.doi.org/10.1017/CBO9781139165136},
  month     = apr,
  year      = {2003},
}

@article{Olsen2012,
  title = {Satellite Geomagnetism},
  volume = {40},
  ISSN = {1545-4495},
  url = {http://dx.doi.org/10.1146/annurev-earth-042711-105540},
  DOI = {10.1146/annurev-earth-042711-105540},
  number = {1},
  journal = {Annual Review of Earth and Planetary Sciences},
  publisher = {Annual Reviews},
  author = {Olsen,  N. and Stolle,  C.},
  year = {2012},
  month = may,
  pages = {441–465}
}

@Article{BGK,
  author  = {Baratchart, L. and Gerhards, C. and Kegeles, A.},
  journal = {SIAM Journal on Mathematical Analysis},
  title   = {Decomposition of ${L}^2$-vector fields on {L}ipschitz surfaces: characterization via null-spaces of the scalar potential},
  year    = {2021},
  pages   = {4096-4117},
  volume  = {53},
}

@article{atfeh10,
	Author = {Atfeh, B. and Baratchart, L. and Leblond, J. and Partington, J.R.},
	Journal = {Journal of Fourier Analysis and Applications},
	Pages = {177-203},
	Title = {Bounded extremal and {C}auchy-{L}aplace problems on the sphere and shell},
	Volume = {16},
	Year = {2010}}

@article{baratchartgerhards16,
	Author = {Baratchart, L. and Gerhards, C.},
	Journal = {SIAM Journal on Applied Mathematics},
	Pages = {1756-1780},
	Title = {On the Recovery of Crustal and Core Contributions in Geomagnetic Potential Fields},
	Volume = {77},
	Year = {2017}}

@article{baratchart13,
	Author = {Baratchart, L. and Hardin, D.P. and Lima, E.A. and Saff, E.B. and Weiss, B.P.},
	Journal = {Inverse Problems},
	Pages = {015004},
	Title = {Characterizing Kernels of Operators Related to Thin Plate Magnetizations via Generalizations of {H}odge Decompositions},
	Volume = {29},
	Year = {2013}}

@book{freedengerhards12,
	Author = {Freeden, W. and Gerhards, C.},
	Publisher = {Chapman \& Hall/CRC},
	Series = {Pure and Applied Mathematics},
	Title = {Geomathematically Oriented Potential Theory},
	Year = {2012}}

@book{freedenschreiner09,
	Author = {Freeden, W. and Schreiner, M.},
	Publisher = {Springer},
	Title = {Spherical Functions of Mathematical Geosciences},
	Year = {2009}}

@incollection{gerhards18a,
	Author = {Gerhards, C.},
	Booktitle = {Handbook of Mathematical Geodesy},
	Editor = {Freeden, W. and Nashed, M.Z.},
	Publisher = {Springer},
	Title = {Spherical Potential Theory: Tools and Applications},
	Year = {2018}}

@article{gerhards16a,
	Author = {Gerhards, C.},
	Journal = {Inverse Problems},
	Pages = {015002},
	Title = {On the Unique Reconstruction of Induced Spherical Magnetizations},
	Volume = {32},
	Year = {2016}}

@article{gubbins11,
	Author = {Gubbins, D. and Ivers, D. and Masterton, S.M. and Winch, D.E.},
	Journal = {Geophysical Journal International},
	Pages = {99-117},
	Title = {Analysis of lithospheric magnetization in vector spherical harmonics},
	Volume = {187},
	Year = {2011}}

@article{lesur10,
	Author = {Lesur, V. and Wardinski, I. and Hamoudi, M. and Rother, M.},
	Journal = {Earth, Planets and Space},
	Pages = {765-773},
	Title = {The second generation of the {GFZ} {Reference} {Internal} {Magnetic} {Model}: {GRIMM-2}},
	Volume = {62},
	Year = {2010}}

@article{olsen10b,
	Author = {Olsen, N. and Glassmeier, K-H. and Jia, X.},
	Journal = {Space Science Reviews},
	Pages = {135-157},
	Title = {Separation of the Magnetic Field into External and Internal Parts},
	Volume = {152},
	Year = {2010}}

@incollection{olsen15,
	Author = {Olsen, N. and Hulot, G. and Sabaka, T.J.},
	Booktitle = {Handbook of Geomathematics},
	Edition = {2nd},
	Editor = {Freeden, W. and Nashed, M.Z. and Sonar, T.},
	Publisher = {Springer},
	Title = {Sources of the Geomagnetic Field and the Modern Data that Enable their Investigation},
	Year = {2015}}

@incollection{sabaka15,
	Author = {Sabaka, T.J. and Hulot, G. and Olsen, N.},
	Booktitle = {Handbook of Geomathematics},
	Edition = {2nd},
	Editor = {Freeden, W. and Nashed, M.Z. and Sonar, T.},
	Publisher = {Springer},
	Title = {Mathematical Properties Relevant to Geomagnetic Field Modeling},
	Year = {2015}}

@InCollection{plattnersimons14b,
  author    = {Plattner, A. and Simons, F. J.},
  booktitle = {Handbook of Geomathematics},
  publisher = {Springer},
  title     = {Potential field estimation using scalar and vector {S}lepian functions at satellite altitude},
  year      = {2015},
  edition   = {2nd},
  editor    = {Freeden, W. and Nashed, M.Z. and Sonar, T},
}

@article{verles18,
	Author = {Vervelidou, F. and Lesur, V.},
	Journal = {Geophysical Research Letters},
	Pages = {283-292},
	Title = {Unveiling {E}arth's Hidden Magnetization},
	Volume = {45},
	Year = {2018}}

@article{verles19,
	Author = {Lesur, V. and Vervelidou, F.},
	Doi = {doi.org/10.1093/gji/ggz471},
	Journal = {Geophysical Journal International},
	Pages = {981-995},
	Title = {Retrieving lithospheric magnetization distribution from magnetic field models},
	Volume = {220},
	Year = {2020}}

@Article{barvilhar19,
  author     = {Baratchart, L. and Villalobos Guill\'{e}n, C. and Hardin, D. P. and Northington, M. C. and Saff, E. B.},
  journal    = {Found. Comp. Math.},
  title      = {Inverse Potential Problems for Divergence of Measures with Total Variation Regularization},
  year       = {2020},
  pages      = {1273-1307},
  volume     = {20},
  doi        = {10.1007/s10208-019-09443-x},
}

@Article{olsen20,
  author  = {Finlay, C. C. and Kloss, C. and Olsen, N. and others},
  journal = {Earth, Planets and Space},
  title   = {The {CHAOS-7} geomagnetic field model and observed changes in the {S}outh {A}tlantic {A}nomaly},
  year    = {2020},
  pages   = {176},
  volume  = {72},
}

@Article{sabaka20,
  author  = {Sabaka, T. J. and T{\o}ffner-Clausen, L. and Olsen, N. and Finlay, C. C.},
  journal = {Earth, Planets and Space},
  title   = {{CM6}: a comprehensive geomagnetic field model derived from both {CHAMP} and {Swarm} satellite observations},
  year    = {2020},
  pages   = {80},
  volume  = {72},
}

@Article{isakov12,
  author  = {Elcrat, A. and Isakov, V. and Kropf, E. and Stewart, D.},
  journal = {Inverse Problems},
  title   = {A stability analysis of the harmonic continuation},
  year    = {2012},
  pages   = {075016},
  volume  = {28},
}

@Article{gerhuakeg23,
  author  = {Gerhards, C. and Huang, X. and Kegeles, A.},
  journal = {Journal of Mathematical Analysis and Applications},
  title   = {Relation between {H}ardy components for locally supported vector fields on the sphere},
  year    = {2023},
  pages   = {126572},
  volume  = {517},
}

@Article{Plattner2017,
  author  = {Plattner, A. and Simons, F. J.},
  date    = {2017},
  title   = {{Internal and external potential-field estimation from regional vector data at varying satellite altitude}},
  doi     = {10.1093/gji/ggx244},
  eprint  = {https://academic.oup.com/gji/article-pdf/211/1/207/19523195/ggx244.pdf},
  issn    = {0956-540X},
  pages   = {207-238},
  url     = {https://doi.org/10.1093/gji/ggx244},
  volume  = {211},
  journal = {Geophysical Journal International},
  year    = {2017},
}

@Book{mueller96,
  author    = {M\"uller, C.},
  title     = {Spherical {H}armonics},
  year      = {1996},
  publisher = {Springer},
  series    = {Lecture Notes in Mathematics},
}

@article{Gerhards2025,
  title = {Spherical basis functions in Hardy spaces with localization constraints},
  volume = {306},
  ISSN = {0021-9045},
  url = {http://dx.doi.org/10.1016/j.jat.2024.106124},
  DOI = {10.1016/j.jat.2024.106124},
  journal = {Journal of Approximation Theory},
  publisher = {Elsevier BV},
  author = {Gerhards,  C. and Huang,  X.},
  year = {2025},
  month = mar,
  pages = {106124}
}

@Article{Mayer2006,
  author       = {Mayer, C. and Maier, T.},
  date         = {2006},
  journaltitle = {Geophys. J. Int.},
  title        = {Separating Inner and Outer {E}arth's Magnetic Field from {CHAMP} Satellite Measurements by Means of Vector Scaling Functions and Wavelets},
  pages        = {1188--1203},
  volume       = {167},
  journal      = {Geophys. J. Int.},
  year         = {2006},
}

@Article{emag2,
  author  = {Maus, S. and Brackhausen, U. and Berkenbosch, H. and others},
  title   = {{EMAG2}: A 2–arc min resolution Earth Magnetic Anomaly Grid compiled from satellite, airborne, and marine magnetic measurements},
  volume  = {10},
  journal = {Geochem. Geophys. Geosys.},
  year    = {2009},
}

@Article{Hunt1968,
  author  = {Hunt, R. A. and Wheeden, R. L.},
  title   = {On the boundary values of harmonic functions},
  pages   = {307--322},
  volume  = {132},
  journal = {Trans. Am. Math. Soc.},
  year    = {1968},
}

@Article{plamazger24,
  author  = {Plattner, A. and Mazarico, E. and Gerhards, C.},
  title   = {Variable altitude cognizant {S}lepian functions},
  pages   = {16},
  volume  = {15},
  journal = {GEM - Int. J. Geomath.},
  year    = {2024},
}

@Article{aleron09,
  author  = {Alessandrini, G. and Rondi, L. and Rosset, E. and Vessella, S.},
  title   = {The stability for the {C}auchy problem for elliptic equations},
  pages   = {123004},
  volume  = {25},
  journal = {Inverse Problems},
  year    = {2009},
}

@Article{ruesal19,
  author  = {R\"uland, A. and Salo, M.},
  title   = {Quantitative {R}unge Approximation and Inverse Problems},
  pages   = {6216–6234},
  volume  = {20},
  journal = {Int. Math. Res. Not.},
  year    = {2019},
}

@Article{bou10,
  author  = {Bourgeois, L.},
  title   = {About stability and regularization of ill-posed elliptic {C}auchy problems: the case of $C^ {1,1}$ domains},
  pages   = {715-735},
  volume  = {44},
  journal = {ESAIM - Math. Model. Numer. Anal.},
  year    = {2010},
}

@Article{ren25,
  author  = {Ren, Z. and Zuo, Z. and Yao, H. and Chen, C. and Xu, L. and Tang, J. and Zhang, K.},
  title   = {Mag{TF}s: A tool for estimating multiple magnetic transfer functions to constrain {E}arth’s electrical conductivity structure},
  pages   = {105769},
  volume  = {195},
  journal = {Computers and Geosciences},
  year    = {2025},
}

@Article{kuv12,
  author  = {Kuvshinov, A.},
  title   = {Deep electromagnetic studies from land, sea, and space: progress status in the past 10 years},
  pages   = {169-209},
  volume  = {33},
  journal = {Surv. Geophys.},
  year    = {2012},
}

@Article{gray24,
  author  = {Grayver, A.},
  title   = {Unravelling the Electrical Conductivity of {E}arth and Planets},
  pages   = {187-238},
  volume  = {45},
  journal = {Surv. Geophys.},
  year    = {2024},
}

@Article{kuv15,
  author  = {P\"uthe, C. and Kuvshinov, A. and Olsen, N.},
  title   = {Handling complex source structures in global {EM} induction studies: from {C}-responses to new arrays of transfer functions},
  pages   = {318-328},
  volume  = {201},
  journal = {Geophys. J. Int.},
  year    = {2015},
}
\bibliographystyle{plain}

\normalsize
\appendix

\section{Divergence-free contributions}\label{app:divfree}

Beside the Hardy-Hodge decomposition $L^2(\SS)^3=\mathcal H_+(\SS)\oplus \mathcal H_-(\SS)\oplus \mathcal H_{\mathrm{df}}(\SS)$, there also exists the classical Helmholtz-Hodge decomposition $L^2(\SS)^3=\mathcal H_\eta(\SS)\oplus \mathcal H_{\mathrm{cf}}(\SS)\oplus \mathcal H_{\mathrm{df}}(\SS)$ of normal, tangential curl-free, and tangential divergence-free contributions (see, e.g., \cite{freedengerhards12,gerhards18a} and references therein). That is, any vector field $\f\in L^2(\SS)^3$ can be decomposed as $\f=\f_++\f_-+\f_{\mathrm{df}}=\eta f_\eta+\f_{\mathrm{cf}}+\f_{\mathrm{df}}=\eta f_\eta+\f_T$, with the divergence-free contribution $\f_{\mathrm{df}}$ appearing in both decompositions.

If we are interested in also including the divergence-free contribution in our previous study, we first observe that $f_\eta$ and $\f_T$ are uniquely determined on the patch $U$ by knowledge of $\f$ only on $U$. Since the divergence-free contribution can be expressed as $\f_{\mathrm{df}}=(\eta\times\nabla_\SS)f_{\mathrm{df}}$ for some $f_{\mathrm{df}}\in H^1(\SS)$, it holds that 
\[
\Delta_\SS\,f_{\mathrm{df}}=\mathrm{curl}_\SS\, \f_T
\]
in $H^{-1}(\SS)$. The prescription of adequate boundary conditions for $f_{\mathrm{df}}$ on $\partial U$ provides uniqueness of $\f_{\mathrm{df}}$ in $U$. Without such boundary conditions, no uniqueness of $\f_{\mathrm{df}}$ is guaranteed, even under the altitude/analyticity constraints of Section \ref{subsec:localized-conditioned}, if $\f$ is only known on $U$. However, given such boundary conditions, the computation of $\f_{\mathrm{df}}$ based on solving the Laplace-Beltrami equation is a standard elliptic boundary value problem and is stable in the corresponding well-posed boundary-value setting. The only potential source of instability comes through the application of $\mathrm{curl}_\SS$ to the tangential contribution $\f_T$ on the right hand side. This instability, however, is negligible compared to the exponential instability of the internal/external potential field separation.

\section{Nullspace characterization}\label{app:null}

\begin{corollary}\label{cor:nullspace}
    The nullspace of the operator $\mathcal A_U$ from Theorem \ref{thm:nonunique} takes the form
    \begin{align}
    	\ker(\mathcal A_U)=\big\{(B_+^{\ast} f,B_-^{\ast}g):\,&f= \chi_U(K^{\ast}+1/2)S^{-1}h_1+h_2,\, g=S^{-1}h_1-f,\label{eqn:nullspace}
	    	\\&\textnormal{for any }h_1\in \widetilde H^1_{\text{const}}(V),\, h_2\in L^2(V)\big\},\nonumber
	    \end{align} 
	    with $\chi_U$ denoting the characteristic function on $U$. 
\end{corollary}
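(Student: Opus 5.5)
The plan is to prove the two inclusions separately, reusing the computation from the proof of Theorem~\ref{thm:nonunique}. Throughout we use that every element of $\mathcal H_+(\SS)$ can be written as $B_+^\ast f$ with $f\in L^2(\SS)$, and every element of $\mathcal H_-(\SS)$ as $B_-^\ast g$ with $g\in L^2(\SS)$. On the sphere this follows from \eqref{eq:Bpm-on-Y-correct}: the multipliers $-\tfrac1{2n+1}$ combined with $\|\mathbf G^{\mathrm{ext/int}}_{n,k}\|\sim n$ show that $B_\pm^\ast$ have closed range. Hence the closures in \eqref{eq:Hpm-id} are attained. The representing densities are not unique: $f$ is only determined modulo constants, since $B_+^\ast 1=0$. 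This is harmless, because we only claim a parametrization of the kernel and not uniqueness of $(h_1,h_2)$.

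\emph{Inclusion ``$\supseteq$''.} Let $h_1\in\widetilde H^1_{\mathrm{const}}(V)$ and $h_2\in L^2(V)$, and define $f$ and $g$ as in \eqref{eqn:nullspace}. Then $f+g=S^{-1}h_1$, so $\grads S(f+g)=\grads h_1$. This vanishes on $U$ because $h_1$ is constant there; the weak tangential gradient of an $H^1$ function vanishes a.e.\ on the set where the function is constant. Thus \eqref{eqn:loccond2} holds. Next, on $U$ we have $f=(K^\ast+\tfrac12)S^{-1}h_1$, since $\chi_U=1$ and $h_2=0$ there. Hence $(K^\ast+\tfrac12)(f+g)-f=0$ on $U$, which is \eqref{eqn:loccond1}. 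The normal and tangential parts of $B_+^\ast f+B_-^\ast g$ therefore both vanish on $U$, so the pair lies in the kernel. We also check that $f$ and $g$ lie in $L^2(\SS)$: $S^{-1}:H^1\to L^2$ and $K^\ast$ is bounded on $L^2$.

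\emph{Inclusion ``$\subseteq$''.} Take $(\f_+,\f_-)\in\ker(\mathcal A_U)$ and write $\f_+=B_+^\ast f$, $\f_-=B_-^\ast g$. As in the proof of Theorem~\ref{thm:nonunique}, the vanishing tangential part gives $\grads S(f+g)=0$ on $U$. The main obstacle is to conclude from this that $h_1:=S(f+g)\in H^1(\SS)$ is a.e.\ constant on $U$. This needs a Poincar\'e-type argument on $U$, which uses that $U$ is connected (an open patch with connected Lipschitz boundary; we take $U$ itself to be connected). Given this, $h_1\in\widetilde H^1_{\mathrm{const}}(V)$ and $f+g=S^{-1}h_1$, i.e.\ $g=S^{-1}h_1-f$. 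The vanishing normal part, \eqref{eqn:loccond1}, then gives $f|_U=[(K^\ast+\tfrac12)S^{-1}h_1]|_U$. Setting $h_2:=f-\chi_U(K^\ast+\tfrac12)S^{-1}h_1$, we get $h_2=0$ a.e.\ on $U$, so $h_2\in L^2(V)$, and $f$ has exactly the stated form. This completes the characterization.
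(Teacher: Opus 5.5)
Your proposal is correct and follows the paper's route: the paper reads the corollary off the computation in the proof of Theorem~\ref{thm:nonunique}. That computation splits $B_+^\ast f+B_-^\ast g$ on $U$ into normal and tangential parts, sets $h_1=S(f+g)$, and leaves $f$ free on $V$; you merely separate it into the two inclusions. Your two added justifications make explicit points the paper uses tacitly: closed range of $B_\pm^\ast$, so every kernel element really has the form $(B_+^\ast f,B_-^\ast g)$, and connectedness of $U$, so that $\grads h_1=0$ on $U$ forces a single constant — and that connectedness already follows from the standing assumption that $U$ is a Lipschitz patch with connected boundary and $V\neq\emptyset$.
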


\section{Auxiliary results}\label{app:aux}

\begin{proof}[Proof of \eqref{eqn:uminusest}]

    For an orthonormal basis of spherical harmonics $\{Y_{n,k}\}_{n\in\mathbb{N}_0,k=-n,\ldots,n}$, we have
	\[
		\int_{\SS}Y_{n,k}\overline{Y_{m,\ell}}\,d\sigma=\delta_{nm}\delta_{k\ell},
		\qquad
		\int_{\SS}\nabla_{\SS}Y_{n,k}\cdot\overline{\nabla_{\SS}Y_{m,\ell}}\,d\sigma
		=n(n+1)\delta_{nm}\delta_{k\ell}.
	\]
    With $\mathbf G_{n,k}^{\mathrm{int}}=\mathbf\Psi_{n,k}-(n+1)\mathbf Y_{n,k}$ and $\mathbf Y_{n,k}$ and $\mathbf\Psi_{n,k}=\nabla_{\SS}Y_{n,k}$ as in Section \ref{subsec:spherical-hhd}, this leads to
	\[
		\|\mathbf G_{n,k}^{\mathrm{int}}\|_{L^2(\SS)^3}^2
		= n(n+1)+(n+1)^2 =(n+1)(2n+1).
	\]
	Furthermore, if a function $u_-$ on the sphere in $\R^3\setminus \BB$ takes the form
	\[
		u_-(\rho,\xi)=\sum_{n=0}^\infty\sum_{k=-n}^n a_{n,k}\,q_n(\rho)Y_{n,k}(\xi),
		\qquad 1<\rho<1+s,\quad \xi\in\SS,
	\]
	with coefficients $a_{n,k}\in\R$, then, based on the expression $\nabla = \eta\,\partial_\rho + r^{-1}\nabla_\SS$, one can compute	\begin{align}
		\|u_-\|_{H^1(A_{1+s})}^2
		=
		\sum_{n=0}^\infty\sum_{k=-n}^n|a_{n,k}|^2
		\int_1^{1+s}
		\left[
			\rho^2|q_n(\rho)|^2
			+\rho^2|q_n'(\rho)|^2
			+n(n+1)|q_n(\rho)|^2
		\right]d\rho. \label{eqn:app-H1-shell-identity}
	\end{align}
    In our setup, we need to make the particular choice $q_n(\rho)=\rho^{-(n+1)}$ and, additionally, choose $s=\tfrac{r-1}{4}$. Then
	\[
		\|u_-\|_{H^1(A_{1+s})}^2
		=
		\sum_{n,k}|a_{n,k}|^2 I_n^-(r),
	\]
	with
	\begin{align*}
		I_n^-(r)
		&=
		\int_1^{1+s}
		\left[
			\rho^2|q_n^-(\rho)|^2
			+\rho^2|(q_n^-)'(\rho)|^2
			+n(n+1)|q_n^-(\rho)|^2
		\right]d\rho  \\
		&=
		\int_1^{1+s}
		\left[
			\rho^{-2n}
			+(n+1)^2\rho^{-2n-2}
			+n(n+1)\rho^{-2n-2}
		\right]d\rho
		\\&\le
		s\left[1+(n+1)^2+n(n+1)\right]
		\le
		C\,s\,(n+1)(2n+1).
	\end{align*}
	for some constant $C>0$ neither depending on $U$ nor $r$. The choice
	\[
		C_-(r)=\sup_{n\ge0}
		\sqrt{\frac{I_n^-(r)}{(n+1)(2n+1)}}
		<\infty
	\]
    finally provides us with the desired estimate \eqref{eqn:uminusest}, namely,
    \[
		\|u_-\|_{H^1(A_{1+s})}^2
		\le
		C_-^2(r)\|\f_-\|_{L^2(\SS)^3}^2\leq C_-^2(r)M^2,
	\]
    where we have chosen $\f_-=\nabla u_-|_{\SS}=\sum_{n=0}^\infty\sum_{k=-n}^na_{n,k}\mathbf G_{n,k}^{\mathrm{int}}$ and, consequently, $\|\f_-\|_{L^2(\SS)^3}^2=\sum_{n=0}^\infty\sum_{k=-n}^n|a_{n,k}|^2(n+1)(2n+1)$.

\end{proof}

\begin{lemma}[Weak boundary logarithmic stability]\label{lem:carleman}
Let $s=\tfrac{r-1}{4}$, so that $A_{1+s}\subset A_r$, and fix an arbitrary $\alpha\in(0,1)$. Then, for every $r>1$ and every a priori bound $\tilde{M}>0$, there exists a constant $C=C(r,U,\alpha)>0$ and a threshold $\delta=\delta(r,U,\alpha,\tilde{M})>0$ such that the following holds true: for all $w\in H^1(A_{r})$ that are harmonic in the annulus $A_r$, that satisfy
    \[
        \|w\|_{H^2(A_{1+s})}\le \tilde{M},
    \]
    and that obey
    \[
		0<\|w\|_{H^1(U)}+\|\partial_\eta^+ w\|_{L^2(U)}<\delta,
	\]
    we have
	\begin{align}
		\|\nabla w|_\SS\|_{H^{-1/2}(\SS)^3}
		\leq C \tilde{M}
		\left| \ln\left(\frac{c\tilde{M}}{\|w\|_{H^1(U)}+\|\partial_\eta^+ w\|_{L^2(U)}}\right)\right|^{-\alpha}.\label{eqn:carleman-new}
	\end{align}

\end{lemma}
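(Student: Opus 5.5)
The plan is to reduce Lemma~\ref{lem:carleman} to the logarithmic stability estimate for elliptic Cauchy problems in $C^{1,1}$ domains from \cite[Cor.~2.1]{bou10}, applied in a fixed auxiliary domain, and then to convert the resulting interior bound into the weak boundary bound \eqref{eqn:carleman-new}.

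\textbf{Step 1: choice of the domain.} We work in a fixed domain $D\subset A_{1+s}$ whose boundary contains the patch $U$ and a portion of $\SS$ that covers $\SS$ up to a neighbourhood. The simplest choice is $D=A_{1+s}$ itself, with accessible boundary part $\Gamma_0=U\subset\SS$. $A_{1+s}$ is a smooth (hence $C^{1,1}$) domain, $w\in H^2(A_{1+s})$, and $\Delta w=0$. The Cauchy data on $\Gamma_0$ are $(w|_U,\partial_\eta^+w|_U)$, measured in $H^1(U)\times L^2(U)$. This is exactly the data norm in \cite{bou10}, possibly after passing to the $H^{1/2}(U)\times H^{-1/2}(U)$ norms, which are weaker.

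\textbf{Step 2: interior estimate.} \cite[Cor.~2.1]{bou10} gives, for every $\alpha\in(0,1)$ and every $H^2$ a priori bound $\tilde M$, a constant $C$ and a threshold $\delta$. For $\eta:=\|w\|_{H^1(U)}+\|\partial_\eta^+w\|_{L^2(U)}<\delta$ these yield
\[
\|w\|_{H^1(A_{1+s})}\le C\tilde M\,\bigl|\ln(c\tilde M/\eta)\bigr|^{-\alpha}.
\]
The constants depend only on the geometry, namely $r$ (through $s$) and $U$, and on $\alpha$. This is where the hypothesis $\eta>0$ is needed, so that the logarithm is finite, and where the bound $\tilde M$ enters. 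The restriction $\alpha<1$ matches the exponent range of Bourgeois' result for $C^{1,1}$ domains.

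\textbf{Step 3: passage to the boundary.} On the inner sphere $\SS\subset\partial A_{1+s}$ we need $\|\nabla w|_\SS\|_{H^{-1/2}(\SS)^3}$. Since $\Delta w=0$, each component $\partial_i w$ lies in $L^2(A_{1+s})$ with $\Delta\partial_iw=0$. The weak trace theorem for harmonic functions (or for $\operatorname{div}$-free fields, via Green's formula against $H^1$ extensions of $H^{1/2}(\SS)$ test functions) gives $\|\partial_iw|_\SS\|_{H^{-1/2}(\SS)}\le C\|w\|_{H^1(A_{1+s})}$. For this one tests with a cutoff that vanishes near $\SS_{1+s}$, using that $\nabla\partial_i w\cdot\nabla\phi$ integrates by parts. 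Combining this with Step~2 gives \eqref{eqn:carleman-new}.

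\textbf{Main obstacle.} The hardest point is making sure that Bourgeois' corollary really controls the $H^1$ norm on the \emph{whole} domain up to the inaccessible boundary, rather than only on compact subsets. The global version for $C^{1,1}$ domains is exactly what \cite{bou10} provides, at the cost of the exponent $\alpha<1$. If only interior control were available, I would instead apply it on the slightly larger annulus $A_{1+2s}$ with data on $U$. I would then estimate $\nabla w$ on $\SS$ by an interior-to-boundary argument, using the $H^2$ bound and interpolation, which would only change the exponent and the constants.
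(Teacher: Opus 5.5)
Your proposal is correct and follows essentially the same route as the paper. You apply \cite[Cor.~2.1]{bou10} on the smooth shell, with Cauchy data on $U$ and the $H^2$ a priori bound, to get a logarithmic $H^1$ estimate adjacent to $\SS$ (you on $A_{1+s}$, the paper on $A_{1+s/2}$), and then pass to $\|\nabla w|_\SS\|_{H^{-1/2}(\SS)^3}$ by a weak trace bound in terms of $\|w\|_{H^1}$. The only slip is the parenthetical mechanism in Step~3: integrating $\nabla\partial_i w\cdot\nabla\phi$ by parts would bring in $\|w\|_{H^2}$ rather than a small quantity. The needed $H^1$-controlled bound comes instead from the $H^{1/2}$ trace of $w$ for the tangential part plus the weak Neumann trace for the normal part, as in the paper, or equivalently from the $H(\operatorname{div})$ and $H(\operatorname{curl})$ traces of the divergence- and curl-free field $\nabla w$.
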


\begin{proof}
For brevity, set
	\[
		\varepsilon=\|w\|_{H^1(U)}+\|\partial_\eta^+ w\|_{L^2(U)}.
	\]
	Applying the logarithmic Cauchy stability estimate from \cite[Cor.~2.1]{bou10} with the a priori bound $\|w\|_{H^2(A_{1+s})}\le \tilde{M}$, we obtain a constant $C_{\mathrm{cp}}(r,U,\alpha)>0$ and a threshold $\delta=\delta(r,U,\alpha,\tilde{M})>0$ such that, for all $0<\varepsilon<\delta$,
	\begin{align}
        \|w\|_{H^1(A_{1+s/2})}
		\le
		C_{\mathrm{cp}}(r,U,\alpha)\tilde{M}
		\left|
		\ln\left(
		\frac{\tilde{M}}{\varepsilon}
		\right)
		\right|^{-\alpha}. \label{eqn:carlest}
	\end{align}
	The weak boundary trace estimate for harmonic functions in the smooth spherical shell gives
	\[
		\|\nabla w|_\SS\|_{H^{-1/2}(\SS)^3}
		\le C_{\mathrm{tr}}(r)\|w\|_{H^1(A_{1+s/2})}.
	\]
	Indeed, the tangential component is controlled by the trace theorem:
	$w|_\SS\in H^{1/2}(\SS)$ and hence
	$\nabla_\SS \,w|_\SS\in H^{-1/2}(\SS)^3$. The normal component is understood
	as the weak Neumann trace, which is continuous for harmonic $H^1$-functions
	in the shell:
	\[
		\|\partial_\eta^+ w\|_{H^{-1/2}(\SS)}
		\le C_{\mathrm{tr}}(r)\|w\|_{H^1(A_{1+s/2})}.
	\]
	Consequently,
	\[
		\|\nabla w|_\SS\|_{H^{-1/2}(\SS)^3}
		\le C_{\mathrm{tr}}(r)\|w\|_{H^1(A_{1+s/2})}.
	\]
	Combining this weak trace estimate with \eqref{eqn:carlest}, and absorbing $C_{\mathrm{tr}}(r)C_{\mathrm{cp}}(r,U,\alpha)$ into $C(r,U,\alpha)$, yields \eqref{eqn:carleman-new}.

\end{proof}

\end{document}